
\documentclass[final]{siamltex}  
\usepackage{cite}
\usepackage{graphics} 
\usepackage{amsmath} 
\usepackage{amssymb}  
\usepackage{subfigure}

\usepackage{scalefnt}

\usepackage{color}
\usepackage{hyperref}
\usepackage{enumerate}


\newtheorem{example}{Example}

\newcommand{\re}{{\mathbb R}}

\newcommand{\cA}{{\cal{A}}}

\newcommand{\cM}{{\cal{M}}}

\newcommand{\inter}{{{\rm int} }}

\title{\large \bf SOS-Convex Lyapunov Functions and\\ Stability of Difference Inclusions}
\author{Amir Ali Ahmadi and Rapha\"el~M. Jungers
\thanks{Amir Ali Ahmadi is with the Department of Operations Research and Financial Engineering at Princeton University (email: \texttt{a\_a\_a@princeton.edu}). His research has been partially supported by the DARPA Young Faculty Award, the Young Investigator Award of the AFOSR, the CAREER Award of the NSF, the Google Faculty Award, and the Sloan Fellowship. Rapha\"el Jungers is an F.R.S.-FNRS Research Associate at the ICTEAM Institute,
Universit\'e catholique de Louvain (email: \texttt{raphael.jungers@uclouvain.be}). \rmj{His research is supported by the French Community of Belgium, the Walloon Region, and the Innoviris grant BDL-SMARK.}}}

 
\begin{document}

\maketitle \thispagestyle{empty} \pagestyle{empty}

\begin{abstract}	
We introduce the concept of \emph{sos-convex} Lyapunov functions for stability analysis of both linear and nonlinear difference inclusions (also known as discrete-time switched systems). These are polynomial Lyapunov functions that have an algebraic certificate of convexity and that can be efficiently found via semidefinite programming. We prove that sos-convex Lyapunov functions are universal (i.e., necessary and sufficient) for stability analysis of switched linear systems. We show via an explicit example however that the minimum degree of a convex polynomial Lyapunov function can be arbitrarily higher than a non-convex polynomial Lyapunov function. In the case of switched \emph{nonlinear} systems, we prove that existence of a common non-convex Lyapunov function does \emph{not} imply stability, but existence of a common convex Lyapunov function does. We then provide a semidefinite programming-based procedure for computing a full-dimensional subset of the region of attraction of equilibrium points of switched polynomial systems, under the condition that their linearization be stable. We conclude by showing that our semidefinite program can be extended to search for Lyapunov functions that are pointwise maxima of sos-convex polynomials.

\end{abstract}

\begin{keywords}
	Difference inclusions, switched systems, nonlinear dynamics, convex Lyapunov functions, algebraic methods in optimization, semidefinite programming.
\end{keywords}


\section{Introduction} The most commonly used Lyapunov functions in control theory, namely the quadratic ones, are convex functions. This convexity property is not always purposefully sought after; it is simply an artifact of the nonnegativity requirement of Lyapunov functions, which for quadratic forms coincides with convexity. If one however seeks Lyapunov functions that are polynomial functions of degree larger than two (for instance, for improving some sort of performance metric), then convexity is no longer implied by the nonnegativity requirement of the Lyapunov function (consider, e.g., the polynomial $x_1^2x_2^2$). In this paper we ask the following question: what is there to gain (or to lose) by requiring that a polynomial Lyapunov function be convex? We also present a computational methodology, based on semidefinite programming, for automatically searching for convex polynomial Lyapunov functions.



Our study of this question is motivated by, and for the purposes of this paper exclusively focused on, the stability problem for difference inclusions, also known as discrete time switched systems. We are concerned with an uncertain and time-varying map
\begin{equation}
x_{k+1}=\tilde{f_k}(x_k), \label{eq:switched.nonlinear.system}
\end{equation}
where 
\begin{equation}
\tilde{f_k}(x_k)\in conv\{f_1(x_k),\ldots,f_m(x_k)\}.\label{eq:ftilda=conv}
\end{equation}
Here, $f_1,\ldots,f_m: \mathbb{R}^n\rightarrow\mathbb{R}^n$ are $m$ different (possibly nonlinear) continuous maps with $f_i(0)=0$, and $conv$ denotes the convex hull operation. The question of interest is (local or global) \emph{asymptotic stability under arbitrary switching}. This means that we would like to know whether the origin is stable in the sense of Lyapunov (see~\cite{Khalil:3rd.Ed} for a definition) and attracts all initial conditions (either in a neighborhood or globally) for \emph{all} possible values that $\tilde{f}_k$ can take at each time step $k$.

The special case of this problem where the maps $f_1,\ldots,f_m$ are \emph{linear} has been and continues to be the subject of intense study in the control community, as well as in the mathematics and computer science communities \cite{jsr-toolbox,morris-ergodic,cfbousch,Shorten05stabilitycriteria,jungers_lncis,LeeD06,bcv12,liberzon-cdc10}. A switched linear system in this setting is given by
\begin{equation}\label{eq:switched.linear.system}
x_{k+1}\in conv\{A_ix_k\}, \quad i=1,\ldots,m, 
\end{equation}
where $A_1,\ldots,A_m$ are $m$ real $n\times n$ matrices. Local (or equivalently global) asymptotic stability under arbitrary switching of this system is equivalent to the \emph{joint spectral radius} of these matrices being strictly less than one. 
\begin{definition}[Joint Spectral Radius (JSR) \cite{RoSt60}] 
The {\em joint spectral radius} of a set of matrices $\cM$ is defined as
\begin{equation}\label{eq:JSR}
\rho(\cM)=\lim_{k\rightarrow \infty} \
\max_{A_1,\dots,A_k\in \cM}||A_1\dots A_k||^{1/k},
\end{equation} where 
$\|\cdot\|$ is any matrix norm on $\mathbb{\re}^{n \times n}.$
\end{definition}

Deciding whether $\rho<1$ is notoriously difficult. No finite time procedure for this purpose is known to date, and the related problems of testing whether $\rho \leq 1$ or whether the trajectories of (\ref{eq:switched.linear.system}) are bounded under arbitrary switching are known to be undecidable \cite{BlTi3}. On the positive side however, a large number of sufficient conditions for stability of such systems are known. Most of these conditions are based on the numerical construction of special classes of Lyapunov functions, a subset of which enjoy theoretical guarantees in terms of their quality of approximation of the joint spectral radius \cite{GP11,Ando98,Pablo_Jadbabaie_JSR_journal,protasov-jungers-blondel09,jungersguglielmicicone12}.

It is well known that if the switched linear system (\ref{eq:switched.linear.system}) is stable\footnote{Throughout this paper, by the word ``stable'' we mean asymptotically stable under arbitrary switching.}, then it admits a common \emph{convex} Lyapunov function, in fact a norm \cite{jungers_lncis}. It is also known that stable switched linear systems admit a common \emph{polynomial} Lyapunov function \cite{Pablo_Jadbabaie_JSR_journal}. It is therefore natural to ask whether existence of a common \emph{convex polynomial} Lyapunov function is also necessary for stability. One would in addition want to know how the degree of such convex polynomial Lyapunov function compares with the degree of a non-convex polynomial Lyapunov function. We address both of these questions in this paper. 

It is not difficult to show (see \cite[Proposition 1.8]{jungers_lncis}) that stability of the linear inclusion (\ref{eq:switched.linear.system}) is equivalent to stability of its ``corners''; i.e. to 
stability of a switched system that at each time step applies one of the $m$ matrices $A_1,\ldots,A_m$, but never a matrix strictly inside their convex hull. This statement is no longer true for the switched nonlinear system in (\ref{eq:switched.nonlinear.system})-(\ref{eq:ftilda=conv}); see Example \ref{ex:nonconvex.fails} in Section~\ref{subsec:nl.global} of this paper. It turns out, however, that one can still prove switched stability of the entire convex hull by finding a common \emph{convex} Lyapunov function for the corner systems $f_1,\ldots,f_m$. This is demonstrated in our Proposition \ref{prop:convex.lyap} and Example \ref{ex:convex.lyap}, where we demonstrate that convexity of the Lyapunov function is important in such a setting. 

Such considerations motivate us to seek efficient algorithms that can automatically search over all candidate convex polynomial Lyapunov functions of a given degree.
%
This task, however, is unfortunately intractable even when one restricts attention to quartic (i.e., degree-four) Lyapunov functions and switched linear systems. See our discussion in Section~\ref{section:sos-convex}. In order to cope with this issue, we introduce the class of \emph{sos-convex} Lyapunov functions (see Definition \ref{def:sos-convex}). Roughly speaking, these Lyapunov functions constitute a subset of convex polynomial Lyapunov functions whose convexity is certified through an explicit algebraic identity. One can search over sos-convex Lyapunov functions of a given degree by solving a single semidefinite program whose size is polynomial in the description size of the input dynamical system. The methodology can directly handle the linear switched system in (\ref{eq:switched.linear.system}) or its nonlinear counterpart in (\ref{eq:switched.nonlinear.system})-(\ref{eq:ftilda=conv}), if the maps $f_1,\ldots,f_m$ are \emph{polynomial functions}.\footnote{While polynomial dynamical systems are already a broad and significant class of nonlinear dynamical systems, certain extensions are possible. For example, our methodology extends in a straightforward fashion to the case where the functions $f_i$ are rational functions with sign-definite denominators. Extensions to trigonometric dynamical systems may also be possible using the ideas in~\cite{megretski_trig}.} 

We will review some results from the thesis of the first author which show that for certain dimensions and degrees, the set of convex and sos-convex Lyapunov functions coincide. In fact, in relatively low dimensions and degrees, it is quite challenging to find convex polynomials that are not sos-convex \cite{AAA_PP_not_sos_convex_journal}. This is evidence for the strength of this semidefinite relaxation and is encouraging from an application viewpoint. Nevertheless, since sos-convex polynomials are in general a strict subset of the convex ones, a more refined (and perhaps more computationally relevant) converse Lyapunov question for switched linear systems is to see whether their stability guarantees existence of an sos-convex Lyapunov function. This question is also addressed in this paper.

We shall remark that there are other classes of convex Lyapunov functions whose construction is amenable to convex optimization. The main examples include polytopic Lyapunov functions, and piecewise quadratic Lyapunov functions that are a geometric combination of several quadratics \cite{gwz05,protasov2,jungersprotasov09,protasov-jungers-blondel09,BlNes05,dual_LMI_diff_inclusions,LeeD06}. These Lyapunov functions are mostly studied for the case of linear switched systems, where they are known to be necessary and sufficient for stability. The extension of their applicability to polynomial switched systems should be possible via the sum of squares relaxation. Our focus in this paper however is solely on studying the power of sos-convex polynomial Lyapunov functions. Only in our last section, do we briefly comment on extensions to piecewise sos-convex Lyapunov functions.

\subsection{Related work}
The literature on stability of switched systems is too extensive for us to review. We simply refer the interested reader to \cite{Shorten05stabilitycriteria,gst-book,jungers_lncis} and the references therein. Closer to the specific focus of this paper is the work of Mason et al. \cite{mason-boscain-chitour06}, where the authors prove existence of polynomial Lyapunov functions for switched linear systems in \emph{continuous time}. Our proof of the analogous statement in discrete time closely follows theirs. In \cite{ahmadi2017sum}, Ahmadi and Parrilo show that in the continuous time case, existence of the Lyapunov function of Mason et al. further implies existence of a Lyapunov function that can be found with sum of squares techniques. 
In \cite{Pablo_Jadbabaie_JSR_journal}, Parrilo and Jadbabaie prove that stable switched linear systems in discrete time always admit a (not necessarily convex) polynomial Lyapunov function which can be found with sum of squares techniques. Blanchini and Franco show in~\cite{blanchini_no_convex_Lyap} that in contrast to the case of uncontrolled switching (our setting), controlled linear switched systems, both in discrete and continuous time, can be stabilized by means of a suitable switching strategy without necessarily admitting a convex Lyapunov function.

In \cite{Chesi_Hung_journal},~\cite{Chesi_Hung_conf}, Chesi and Hung motivate several interesting applications of working with convex Lyapunov functions or Lyapunov functions with convex sublevel sets. These include establishing more regular behavior of the trajectories, ease of optimization over sublevel sets of the Lyapunov function, stability of recurrent neural networks, etc. The authors in fact propose sum of squares based conditions for imposing convexity of polynomials. However, it is shown in~\cite[Sect. 4]{AAA_PP_CDC10_algeb_convex} that these conditions lead to semidefinite programs of larger size than those of sos-convexity, while at the same time being at least as conservative. Moreover, the works in~\cite{Chesi_Hung_journal},~\cite{Chesi_Hung_conf} do not offer an analysis of the performance (existence) of convex Lyapunov functions.

On the optimization side, the reader interested in knowing more about sos-convex polynomials, their role in convex algebraic geometry and polynomial optimization, and their applications outside of control is referred to the works by Ahmadi and Parrilo~\cite{AAA_PP_not_sos_convex_journal},~\cite{AAA_PP_table_sos-convexity}, Helton and Nie~\cite{Helton_Nie_SDP_repres_2}, and Magnani et al.~\cite{convex_fitting}, or to Section 3.3.3 of the edited volume~\cite{Convex_Alg_Geom_BOOK}. Finally, we note that a shorter version of the current paper with some preliminary results appears in~\cite{aaa_raph_sosconvex_cdc} as a conference paper.

\subsection{Organization and contributions of the paper}
The paper is organized as follows. In Section \ref{section:sos-convex}, we present the mathematical and algorithmic machinery for working with sos-convex Lyapunov functions and explain its connection to semidefinite programming. In Section \ref{section:linear}, we study switched linear systems. We show that given any homogeneous Lyapunov function, the Minkowski norm defined by the convex hull of its sublevel set is also a valid (convex) Lyapunov function (Proposition~\ref{prop:Minkowski}). We then show that any stable switched linear system admits a convex polynomial Lyapunov function (Theorem~\ref{thm:existence-convex-poly}). Furthermore, we give algebraic arguments to strengthen this result and prove existence of an sos-convex Lyapunov function (Theorem~\ref{thm:existence-sos-convex-poly}). While existence of a convex polynomial Lyapunov functions is always guaranteed, we prove that in worst case, the degree of such a Lyapunov function can be arbitrarily higher than that of a non-convex polynomial Lyapunov function (Theorem~\ref{thm:degree.higher}).


In Section \ref{section:nonlinear}, we study nonlinear switched systems. We show that stability of these systems cannot be inferred from the existence of a common Lyapunov function for the corner systems (Example~\ref{ex:nonconvex.fails}). However, we prove that this conclusion can be made if the common Lyapunov function is convex (Proposition~\ref{prop:convex.lyap}). We also give a lemma that shows that the radial unboundedness requirement of a Lyapunov function is implied by its convexity (Lemma~\ref{lem:convex.coercive}). We then provide an algorithm based on semidefinite programming that under mild conditions finds a full-dimensional inner approximation to the region of attraction of a locally stable equilibrium point of a polynomial switched system (Theorem~\ref{thm:nl.beta.guarantee}). This algorithm is based on a search for an sos-convex polynomial whose sublevel set is proven to be in the region of attraction via a sum of squares certificate coming from Stengle's Positivstellensatz. Some examples are provided in Section~\ref{subsec:examples}.

Finally, in Section~\ref{sec:future}, we briefly describe some future directions and extensions of our framework to a broader class of convex Lyapunov functions that are constructed from combining several sos-convex polynomials. These extensions are still amenable to semidefinite programming and have connections to the theory of path-complete graph Lyapunov functions proposed in~\cite{ajpr-sicon}.

\section{Sos-convex polynomials}\label{section:sos-convex}

A multivariate polynomial $p(x)\mathrel{\mathop:}=p(x_1,\ldots,x_n)$ is said to be \emph{nonnegative} or \emph{positive semidefinite} (psd) if $p(x)\geq0$ for all $x\in\mathbb{R}^n$. We say that $p$ is a \emph{sum of squares} (sos) if it can be written as  $p=\sum_i q_i^2$, where each $q_i$ is a polynomial. It is well known that if $p$ is of even degree four or larger, then testing nonnegativity is NP-hard, while testing existence of a sum of squares decomposition, which provides a sufficient condition and an algebraic certificate for nonnegativity, can be done by solving a polynomially-sized semidefinite program~\cite{PhD:Parrilo},\cite{sdprelax}.

A polynomial $p\mathrel{\mathop:}=p(x)$ is \emph{convex} if its Hessian $\nabla^2p(x)$ (i.e., the $n\times n$ polynomial matrix of the second derivatives) is \rmj{a} positive semidefinite matrix for all $x\in\mathbb{R}^n$. This is equivalent to the scalar-valued polynomial $y^T\nabla^2p(x)y$ in $2n$ variables $(x_1,\ldots,x_n,y_1,\ldots,y_n)$ being nonnegative. It has been shown in~\cite{NPhard_Convexity} that testing if a polynomial of degree four is convex is NP-hard in the strong sense. This motivates the algebraic notion of \emph{sos-convexity}, which can be checked with semidefinite programming and provides a sufficient condition for convexity.

\begin{definition}\label{def:sos-convex}
A polynomial $p\mathrel{\mathop:}=p(x)$ is \emph{sos-convex} if its Hessian $\nabla^2p(x)$ can be factored as 
$$\nabla^2p(x)=M^T(x)M(x),$$ where $M(x)$ is a polynomial matrix; i.e., a matrix with polynomial entries.
\end{definition}

Polynomial matrices which admit a decomposition as above are called \emph{sos matrices}. The term \emph{sos-convex} was coined in a seminal paper of Helton and Nie~\cite{Helton_Nie_SDP_repres_2}. 
The following theorem is an algebraic analogue of a classical theorem in convex analysis and provides equivalent characterizations of sos-convexity.

\begin{theorem} [Ahmadi and Parrilo~\cite{AAA_PP_table_sos-convexity}] \label{thm:sos.convexity.3.equivalent.defs}
Let $p\mathrel{\mathop:}=p(x)$ be a polynomial of degree $d$ in
$n$ variables with its gradient and Hessian denoted respectively
by $\nabla p\mathrel{\mathop:}=\nabla p(x) $ and
$\nabla^2p\mathrel{\mathop:}=\nabla^2 p(x)$. Let $g_{\lambda}$, $g_\nabla$, and
$g_{\nabla^2}$ be defined as
\begin{equation} \label{eq:defn.g_lambda.g_grad.g_grad2}
\begin{array}{lll}
g_{\lambda}(x,y)&=&(1-\lambda)p(x)+\lambda p(y)-p((1-\lambda)
x+\lambda y),\\
g_\nabla(x,y)&=&p(y)-p(x)-\nabla p(x)^T(y-x), \\
g_{\nabla^2}(x,y)&=&y^{T}\nabla^2p(x)y.
\end{array}
\end{equation}
Then the following are equivalent to sos-convexity of $p$:

\textbf{(a)}  \  $g_{\frac{1}{2}}(x,y)$ is sos\footnote{The
constant $\frac{1}{2}$ in $g_{\frac{1}{2}}(x,y)$ of condition
\textbf{(a)} is arbitrary and chosen for convenience. One can show
that $g_{\frac{1}{2}}$ being sos implies that $g_{\lambda}$ is sos
for any fixed $\lambda\in[0,1]$. Conversely, if $g_{\lambda}$ is
sos for some $\lambda\in(0,1)$, then $g_{\frac{1}{2}}$ is sos.}.


\textbf{(b)}  \  $g_\nabla(x,y)$ is sos.

\textbf{(c)}   \  $g_{\nabla^2}(x,y)$ is sos. 
\end{theorem}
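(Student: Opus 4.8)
The plan is to route all three equivalences through condition \textbf{(c)}, which is closest to the definition of sos-convexity, and then connect the remaining gap functions $g_\lambda$ and $g_\nabla$ to $g_{\nabla^2}$ by Taylor's theorem. Three elementary facts about sums of squares do all the heavy lifting: (i) sos-ness is preserved under any affine substitution of the arguments; (ii) a nonnegative linear combination of sos polynomials is sos; and (iii) for a fixed number of variables and bounded degree the cone of sos polynomials is closed, so coefficientwise limits and Riemann integrals of (uniformly bounded-degree) sos polynomials remain sos. I would first establish \textbf{(c)} $\Leftrightarrow$ sos-convexity, and then prove \textbf{(a)} $\Leftrightarrow$ \textbf{(c)} and \textbf{(b)} $\Leftrightarrow$ \textbf{(c)}.

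For the first equivalence I would prove the underlying matrix lemma: a symmetric polynomial matrix $H(x)$ factors as $H=M^T M$ (i.e.\ is an sos matrix, as in Definition~\ref{def:sos-convex}) if and only if the scalar form $y^T H(x) y$ is sos in $(x,y)$. The ``only if'' direction is immediate, since $y^T M^T M y = \|M y\|^2 = \sum_i (M_i(x)\,y)^2$ with $M_i$ the rows of $M$. For the ``if'' direction, note that $y^T H(x)y$ is homogeneous of degree two in $y$; decomposing each square in an sos representation by its degree in $y$ and matching homogeneous components forces every term to be homogeneous of degree exactly one in $y$, i.e.\ of the form $v_k(x)^T y$. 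Then $y^T H y=\sum_k (v_k(x)^T y)^2 = y^T\big(\sum_k v_k v_k^T\big)y$, so $H=V^T V$ with the $v_k^T$ as the rows of $V$. Applying this with $H=\nabla^2 p$ gives \textbf{(c)} $\Leftrightarrow$ sos-convexity.

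For \textbf{(c)} $\Rightarrow$ \textbf{(b)} and \textbf{(c)} $\Rightarrow$ \textbf{(a)} I would use the integral-remainder form of Taylor's theorem, e.g.
\[
g_\nabla(x,y)=\int_0^1 (1-t)\,(y-x)^T\nabla^2 p\big(x+t(y-x)\big)(y-x)\,dt,
\]
and an analogous representation of $g_\lambda$ as a nonnegatively weighted integral of $(y-x)^T\nabla^2 p(\cdot)(y-x)$ along the segment $[x,y]$. If \textbf{(c)} holds, each integrand is an affine substitution of $g_{\nabla^2}$ scaled by a nonnegative weight, hence sos of uniformly bounded degree; approximating the integral by Riemann sums and invoking closedness of the sos cone shows $g_\nabla$ and $g_\lambda$ (in particular $g_{1/2}$) are sos. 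For the reverse implications \textbf{(b)} $\Rightarrow$ \textbf{(c)} and \textbf{(a)} $\Rightarrow$ \textbf{(c)} I would recover the Hessian form by a second-difference limit: the substitutions give $\tfrac{2}{\epsilon^2}g_\nabla(x,x+\epsilon y)\to y^T\nabla^2 p(x)y$ and $\tfrac{2}{\epsilon^2}g_{1/2}(x-\epsilon y,x+\epsilon y)\to y^T\nabla^2 p(x)y$ as $\epsilon\to 0$, with each left-hand side sos for every $\epsilon>0$ and of bounded degree, so the limit is sos by closedness. The footnote's claim about general $\lambda$ follows from the same two techniques (an integral representation in one direction, and a second-difference limit, valid for $\lambda\in(0,1)$, in the other).

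The main obstacle is the passage of sos-ness through integrals and limits rather than any single algebraic identity: there is no obvious finite sos certificate turning ``$g_{\nabla^2}$ is sos'' directly into ``$g_\nabla$ is sos,'' so the argument must go through the analytic facts (ii)--(iii), and one must check that all polynomials appearing in the Riemann sums and the $\epsilon$-families have degree bounded uniformly (which they do, being affine substitutions of a fixed polynomial). The remaining care is purely bookkeeping: verifying the Taylor identities and confirming that the odd-order terms cancel in the symmetric second difference so that the limit is exactly $g_{\nabla^2}$.
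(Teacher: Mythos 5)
Your proof is correct. Note, however, that the paper does not actually prove this statement --- it is imported verbatim with a citation to Ahmadi and Parrilo~\cite{AAA_PP_table_sos-convexity}, so the only comparison available is with that reference. Your argument uses essentially the same toolkit as theirs: the biform lemma identifying sos matrices $H=M^TM$ with sos-ness of $y^TH(x)y$ (your degree-in-$y$ bookkeeping for the converse direction is the standard and correct argument), closedness of the sos cone in fixed degree and number of variables, preservation of sos under affine substitution, and the passage between $g_\lambda$, $g_\nabla$, $g_{\nabla^2}$ via Taylor integral remainders in one direction and rescaled difference quotients in the other. The only organizational difference is that you route everything through \textbf{(c)} as a hub, whereas the original reference arranges the implications in a cycle (e.g.\ obtaining $g_\nabla$ as the limit of $g_\lambda/\lambda$ as $\lambda\to 0^+$); this changes nothing of substance. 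All the degree bounds you need do hold, since every substitution involved is affine, so the limits stay inside a fixed finite-dimensional closed cone.
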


The above theorem is reassuring in the sense that it demonstrates the invariance of the definition of sos-convexity with respect to the characterization of convexity that one may choose to apply the sos relaxation to. Since existence of an sos decomposition can be checked via semidefinite programming (SDP), any of the three equivalent conditions above, and hence sos-convexity of a polynomial, can also be checked by SDP. Even though the polynomials $g_{\frac{1}{2}}$, $g_\nabla$, $g_{\nabla^2}$ above are all in $2n$ variables and have degree $d$, the structure of the polynomial $g_{\nabla^2}$ allows for much smaller SDPs (see~\cite{AAA_PP_CDC10_algeb_convex} for details). 


In general, finding examples of convex polynomials that are not sos-convex seems to be a nontrivial task, though a number of such constructions are known~\cite{AAA_PP_not_sos_convex_journal}. A complete characterization of the dimensions and the degrees for which the notions of convexity and sos-convexity coincide is available in~\cite{AAA_PP_table_sos-convexity}. 

Crucial for our purposes is the fact that semidefinite programming allows us to not only check if a given polynomial is sos-convex, but also search and optimize over the set of sos-convex polynomials of a given degree. This feature enables an automated search over a subset of convex polynomial Lyapunov functions. Of course, a Lyapunov function also needs to satisfy other constraints, namely positivity and monotonic decrease along trajectories. Following the standard approach, we replace the  inequalities underlying these constraints with their sum of squares counterparts as well.


Throughout this paper, what we mean by an \emph{sos-convex Lyapunov function} is a polynomial function which is sos-convex and satisfies all other required Lyapunov inequalities with sos certificates.\footnote{Even though an sos decomposition in general merely guarantees \rmj{nonnegativity of a polynomial}, sos decompositions obtained numerically from interior point methods generically provide proofs of \rmj{its} \emph{positivity}; see the discussion in~\cite[p.41]{AAA_MS_Thesis}. In this paper, whenever we are concerned with asymptotic stability and prove a result about existence of a Lyapunov function satisfying certain sos conditions, we make sure that the resulting inequalities are strict (cf. Theorem~\ref{thm:existence-sos-convex-poly}).} When the Lyapunov function can be taken to be homogeneous---as is the case when the dynamics are homogeneous~\cite{HomogHomog}---then the following lemma establishes that the convexity requirement of the polynomial automatically meets its nonnegativity requirement.

Recall that a \emph{homogeneous polynomial} (or a \emph{form}) is a polynomial whose monomials all have the same degree.

\begin{lemma}
Convex forms are nonnegative and sos-convex forms are sos.
\end{lemma}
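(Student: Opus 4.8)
The plan is to prove both halves of the lemma by a single idea: evaluating the ``gradient inequality'' characterization of convexity (respectively, sos-convexity) at the origin. Throughout, let $p$ be a form of degree $d$, and note that for the statement to be meaningful we should take $d\geq 2$ (a linear form such as $x_1$ is convex but not nonnegative, so a degree hypothesis is genuinely needed). The two facts we will use repeatedly are that, by homogeneity, $p(0)=0$, and that each entry of $\nabla p$ is a form of degree $d-1\geq 1$, so $\nabla p(0)=0$ as well.

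For the first claim, I would invoke the first-order characterization of convexity. Since a convex form $p$ is differentiable, the inequality $p(y)\geq p(x)+\nabla p(x)^T(y-x)$ holds for all $x,y$. Setting $x=0$ and using $p(0)=0$ and $\nabla p(0)=0$ collapses the right-hand side to zero, yielding $p(y)\geq 0$ for every $y$. Hence convex forms are nonnegative.

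For the second claim, I would run the identical substitution through the \emph{algebraic} analogue supplied by part \textbf{(b)} of Theorem~\ref{thm:sos.convexity.3.equivalent.defs}. Sos-convexity of $p$ means precisely that $g_\nabla(x,y)=p(y)-p(x)-\nabla p(x)^T(y-x)$ is sos in the $2n$ variables $(x,y)$. Specializing $x=0$ and again using $p(0)=0$, $\nabla p(0)=0$ gives $g_\nabla(0,y)=p(y)$. The final step is the elementary but crucial observation that fixing numerical values for a subset of the variables of an sos polynomial preserves the sos property: writing $g_\nabla=\sum_i q_i(x,y)^2$, we obtain $p(y)=g_\nabla(0,y)=\sum_i q_i(0,y)^2$, a sum of squares.

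The argument is short, and the only thing requiring care is the bookkeeping that justifies $p(0)=0$ and $\nabla p(0)=0$, i.e., the degree accounting that makes the origin the correct point at which to evaluate both inequalities; this is exactly where the degree-$\geq 2$ hypothesis enters, and it is the sole genuine hypothesis. One could instead try to derive nonnegativity from characterization \textbf{(c)} (that $y^T\nabla^2 p(x)y$ is sos) via an integration argument, but characterization \textbf{(b)} makes the substitution $x=0$ immediate, so I would use it for both parts and emphasize the pleasant parallel between the analytic and the sos versions of the same one-line computation.
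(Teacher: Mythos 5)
Your argument is correct, but note that the paper itself does not prove this lemma --- it simply points to the references (Helton--Nie, Lemma 8, and Ahmadi--Parrilo, Lemma 3.2). The standard argument in those references is a different one-liner based on Euler's identity: for a form $p$ of degree $d$ one has $x^T\nabla^2 p(x)\,x = d(d-1)\,p(x)$, so positive semidefiniteness of the Hessian immediately gives $p\geq 0$, and if $y^T\nabla^2 p(x)\,y$ is sos (characterization \textbf{(c)} of Theorem~\ref{thm:sos.convexity.3.equivalent.defs}, which is just Definition~\ref{def:sos-convex} unpacked) then substituting $y=x$ exhibits $d(d-1)\,p(x)$ as sos. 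Your route instead goes through the first-order characterization \textbf{(b)}, evaluates at $x=0$, and uses the (correct) observation that partially evaluating an sos polynomial at fixed values of some variables preserves the sos property. Both arguments are valid, and both genuinely require $d\geq 2$: Euler's identity degenerates when $d(d-1)=0$, which is exactly where your bookkeeping $\nabla p(0)=0$ also breaks down, so your explicit flagging of this hypothesis (left implicit in the lemma's statement) is a fair point. The only relative cost of your approach is that the second half leans on the nontrivial equivalence between \textbf{(b)} and sos-convexity from Theorem~\ref{thm:sos.convexity.3.equivalent.defs}, whereas the Euler-identity argument works directly from the Hessian factorization in Definition~\ref{def:sos-convex}; since the paper states that equivalence anyway, this is harmless in context.
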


\begin{proof}
See~\cite[Lemma 8]{Helton_Nie_SDP_repres_2} or~\cite[Lemma 3.2]{AAA_PP_table_sos-convexity}.
\end{proof}

For stability analysis of the switched linear system in (\ref{eq:switched.linear.system}), the requirements of a (common) sos-convex Lyapunov function $V$ are therefore the following:
\begin{equation}\label{eq:sos-convex.Lyap.requirements}
\begin{array}{ll}
V(x) & \mbox{sos-convex}\\
V(x)-V(A_ix) & \mbox{sos for}\ i=1,\ldots,m.
\end{array}
\end{equation}

Given a set of matrices $\{A_1,\ldots,A_m\}$ with rational entries, the search for the coefficients of a fixed-degree polynomial $V$ satisfying the above conditions amounts to solving an SDP whose size is polynomial in the bit size of the matrices. If this SDP is (strictly) feasible, the switched system in (\ref{eq:switched.linear.system}) is stable under arbitrary switching. We remark that the same implication is true if the sos-convexity requirement of $V$ is replaced with the requirement that $V$ is sos; see~\cite[Thm. 2.2]{Pablo_Jadbabaie_JSR_journal}. (This statement fails to hold for switched nonlinear systems.)

In the next section, we study the converse question of existence of a Lyapunov function satisfying the semidefinite conditions in (\ref{eq:sos-convex.Lyap.requirements}).


\section{Sos-convex Lyapunov functions and switched linear systems}\label{section:linear}

As remarked in the introduction, it is known that asymptotic stability of a switched linear system under arbitrary switching implies existence of a common convex Lyapunov function, as well as existence of a common polynomial Lyapunov function. In this section, we show that this stability property in fact implies existence of a common Lyapunov function that is both convex and polynomial (cf. Subsection~\ref{subsec:existence.convex}). Moreover, we strengthen this result to show existence of a common sos-convex Lyapunov function (cf. Subsection~\ref{subsec:sos-convex}).



Before we prove these results, we state a related proposition which shows that in the particular case of switched linear systems, any common Lyapunov function (e.g. a non-convex polynomial) can be turned into a common \emph{convex} Lyapunov function, although not necessarily an efficiently computable one. We believe that this statement must be known, but since we could not pinpoint a reference, we include a proof here.

\begin{proposition}\label{prop:Minkowski}
Consider the switched linear system in (\ref{eq:switched.linear.system}). Suppose $V$ is a common homogeneous and continuous Lyapunov function for (\ref{eq:switched.linear.system}); i.e. satisfies \rmj{$V(x)>0, \forall x\neq 0, $ and $V(x)-V(A_ix)>0, \forall x\neq 0, \forall i=1,\ldots,m.$} Let $$\mathcal{S}\mathrel{\mathop:}=\{x\in\mathbb{R}^n|~\ V(x)\leq 1\}.$$ Then, the Minkowski (a.k.a. gauge) norm defined by the set $\mathcal{S}$, i.e. the function $$W(x)\mathrel{\mathop:}=\inf \{t>0|\ \frac{x}{t}\in conv(\mathcal{S})\},$$
is a convex common Lyapunov function for (\ref{eq:switched.linear.system}).
\end{proposition}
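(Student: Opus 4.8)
The plan is to split the statement into its two requirements --- that $W$ be convex (in fact a norm) and that it strictly decrease along every mode $A_i$ --- and to handle the second, which is the real content, through a set-theoretic contraction argument. First I would record the geometric properties of the sublevel set $\mathcal{S}=\{x:V(x)\le 1\}$. Writing $\delta$ for the degree of homogeneity of $V$ (an even integer when $V$ is a form, so that positive definiteness is consistent with homogeneity), positivity together with continuity and homogeneity forces $V$ to be bounded below on the compact unit sphere by some $c>0$, hence $V(x)\ge c\|x\|^{\delta}$ for all $x$; this makes $\mathcal{S}$ compact. Continuity of $V$ and $V(0)=0$ put $0$ in the interior of $\mathcal{S}$, and evenness of $V$ makes $\mathcal{S}$ symmetric. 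Consequently $C:=\mathrm{conv}(\mathcal{S})$ is a compact, symmetric, convex set with $0\in\inter(C)$, i.e. a symmetric convex body, so its gauge $W$ is finite and positive on $\mathbb{R}^n\setminus\{0\}$, positively homogeneous, subadditive and symmetric --- that is, a norm, and in particular convex and continuous. This disposes of the convexity and positivity requirements by standard convex analysis; note that convexifying $\mathcal{S}$ is exactly what repairs the possible non-convexity of $V$ (and symmetry is needed only to call $W$ a norm, not for the Lyapunov properties below).

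The heart of the proof is a uniform contraction estimate. Since $V$ is positive and continuous on the compact unit sphere, it is bounded below there by $c>0$; since $V(A_ix)<V(x)$ for every $x\neq 0$ and both sides are continuous, the ratio $V(A_ix)/V(x)$ attains a maximum on the sphere that is strictly less than $1$. Taking the largest such value over the finitely many modes yields a single $\gamma<1$ with $V(A_ix)\le \gamma V(x)$ for all $x$ and all $i$, homogeneity extending the sphere estimate to all of $\mathbb{R}^n$. Equivalently, with $\beta:=\gamma^{1/\delta}<1$, one has $A_i\mathcal{S}\subseteq \beta\mathcal{S}$, because $V(x)\le 1$ implies $V(A_ix)\le\gamma=\beta^{\delta}$, i.e. $A_ix\in\beta\mathcal{S}$.

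It remains to transport this contraction through the convex hull and into the gauge. Because each $A_i$ is linear it commutes with both convex hull and scaling, so
\[
A_iC=A_i\,\mathrm{conv}(\mathcal{S})=\mathrm{conv}(A_i\mathcal{S})\subseteq \mathrm{conv}(\beta\mathcal{S})=\beta C .
\]
Using that $x\in W(x)\,C$ for the closed convex body $C$, I would then write $A_ix\in A_i\big(W(x)\,C\big)=W(x)\,A_iC\subseteq \beta W(x)\,C$, which by definition of the gauge gives $W(A_ix)\le \beta W(x)<W(x)$ for every $x\neq 0$, establishing the strict Lyapunov decrease for $W$.

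The parts I expect to be routine are the norm/convexity properties of a gauge of a symmetric convex body, which are standard. The delicate points, and the ones I would write out carefully, are (i) extracting the uniform factor $\gamma<1$ from the merely pointwise strict decrease, which relies on compactness of the sphere and positive definiteness of $V$ to keep the denominator away from zero, and (ii) the observation that passing from $\mathcal{S}$ to $\mathrm{conv}(\mathcal{S})$ does not weaken the contraction --- which is exactly where linearity of the $A_i$ (their commuting with convex hull and scaling) is used, and is the step that would fail for the nonlinear system in (\ref{eq:switched.nonlinear.system})--(\ref{eq:ftilda=conv}).
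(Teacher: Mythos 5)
Your proposal is correct and follows essentially the same route as the paper's proof: establish that the gauge of $\mathrm{conv}(\mathcal{S})$ is a norm, extract a uniform contraction factor $\gamma<1$ with $A_i\mathcal{S}\subseteq\gamma\mathcal{S}$, and push it through the convex hull using linearity of the $A_i$. You actually supply a detail the paper omits (the compactness-of-the-sphere argument for why the pointwise strict decrease yields a uniform $\gamma$), but the overall structure is identical.
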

\begin{proof}	Since under the assumptions of the proposition the set $conv(\mathcal{S})$ is compact, origin symmetric, and has nonempty interior, the function $W$ is a norm (see e.g.~\cite[p. 119]{BoydBook}) and hence convexity and positivity of $W$ are already established. It remains to show that for any $i\in\{1,\ldots,m\}$ and $x\neq 0$ we have 	
\begin{equation}\nonumber
\begin{array}{lll}
W(A_ix)&=& \inf \{t>0|\ \frac{A_ix}{t}\in conv(\mathcal{S})\}\\
\ &<& \inf \{t>0|\ \frac{x}{t}\in conv(\mathcal{S})\}\\
\ &=& W(x).
\end{array}
\end{equation}
To see the inequality, first note that because $V$ is a common Lyapunov function, there must exist a constant $\gamma\in(0,1)$ such that if $z\in\mathcal{S}$, then $A_iz\in\gamma \mathcal{S}$. Now observe that if for some $t>0$ we have $\frac{x}{t}\in conv(\mathcal{S}),$ then by definition $\frac{x}{t}=\sum_j \lambda_j y_j$ for some $y_j\in\mathcal{S}$ and $\lambda_j\geq 0$ with $\sum_j \lambda_j =1.$ Hence, $$\frac{A_i x}{t}=\sum_j  \lambda_j A_iy_j=\sum_j  \lambda_j \gamma w_j,$$ for some $w_j\in\mathcal{S}$. But this means that $\frac{A_i x}{\gamma t}\in conv(\mathcal{S}).$
\end{proof}

\subsection{Existence of convex polynomial Lyapunov functions}\label{subsec:existence.convex}
The goal of this subsection is to prove the following theorem.


\begin{theorem}\label{thm:existence-convex-poly}
If the switched linear system (\ref{eq:switched.linear.system}) is asymptotically stable under arbitrary switching, then there exists a convex positive definite homogeneous polynomial $p$ that satisfies $p(A_ix)<p(x)$ for all $x\neq 0$ and all $i\in\{1,\ldots,m\}.$
\end{theorem}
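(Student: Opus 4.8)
The plan is to build the Lyapunov function \emph{explicitly} as a convergent sum of convex forms indexed by the finite products of the $A_i$, exploiting the fact that the space of homogeneous polynomials of a fixed degree is finite dimensional. This is the discrete-time analogue of the construction of Mason et al., and it has the advantage of being self-contained: it produces a convex \emph{polynomial} directly, without first passing through a norm (as in Proposition~\ref{prop:Minkowski}) or through a non-convex polynomial Lyapunov function.

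First, since the system is asymptotically stable under arbitrary switching, $\rho(\{A_1,\dots,A_m\})<1$. I would fix $\hat\rho$ with $\rho<\hat\rho<1$. By the definition of the joint spectral radius, there is a constant $C>0$ such that every product $A_\sigma:=A_{i_k}\cdots A_{i_1}$ of length $k$ (with $\sigma=(i_1,\dots,i_k)\in\{1,\dots,m\}^k$) satisfies $\|A_\sigma\|\le C\hat\rho^{\,k}$, where $\|\cdot\|$ is the operator $2$-norm; I would record this uniform geometric decay as a preliminary lemma. Next, I would pick any convex positive definite form of even degree $2d$, e.g.\ $r(x)=(x^Tx)^d=\|x\|^{2d}$, and define
\[
p(x)\;:=\;\sum_{k=0}^{\infty}\ \sum_{\sigma\in\{1,\dots,m\}^k} r(A_\sigma x),
\]
with the convention that the $k=0$ term is $r(x)$. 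Each summand $r(A_\sigma x)$ is a homogeneous polynomial of degree $2d$, is convex (a convex function composed with a linear map), and is nonnegative. To see that the series converges, note that on the unit sphere $S$ one has $\sup_{\|x\|=1} r(A_\sigma x)=\|A_\sigma\|^{2d}\le C^{2d}\hat\rho^{\,2dk}$, whence $\sum_k\sum_{|\sigma|=k}\sup_{S} r(A_\sigma x)\le C^{2d}\sum_k (m\hat\rho^{\,2d})^k$, which is finite as soon as the degree is taken large enough that $m\hat\rho^{\,2d}<1$, i.e.\ $2d>\log m/\log(1/\hat\rho)$. Because the supremum norm on the sphere is a genuine norm on the finite-dimensional vector space of degree-$2d$ forms, uniform (hence coefficientwise) convergence of this series of forms produces a limit that is itself a single homogeneous polynomial of degree $2d$.

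The required properties are then immediate. Convexity and nonnegativity of $p$ follow because it is a convergent sum of convex nonnegative forms, and positive definiteness follows from $p(x)\ge r(x)>0$ for $x\neq 0$. For the strict decrease I would reindex the series after applying $A_j$: every product of length $k+1$ whose rightmost factor is $A_j$ is of the form $A_\sigma A_j$, so summing over $j$ telescopes to $\sum_{j=1}^m p(A_jx)=p(x)-r(x)$. Since each $p(A_lx)\ge 0$ and $r(x)>0$ for $x\neq0$, this yields $p(A_jx)\le p(x)-r(x)<p(x)$ for every $j$ and every $x\neq0$, which is exactly the claimed inequality.

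The main obstacle---and essentially the only place that requires genuine care---is the interplay between the exponential growth $m^k$ in the number of products of length $k$ and the geometric decay $\hat\rho^{\,2dk}$ of their norms. Convergence of the series, and hence the \emph{polynomiality} that upgrades a classical norm-valued Lyapunov function to a bona fide polynomial one, is secured precisely by choosing the degree $2d$ large relative to $\log m/\log(1/\hat\rho)$. Everything else---convexity, homogeneity, positive definiteness, and the telescoping that delivers the Lyapunov inequality---is structural and transfers termwise.
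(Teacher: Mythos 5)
Your proof is correct, and it takes a genuinely different route from the paper's. The paper first invokes the Rota--Strang extremal norm (Theorem~\ref{thm-rotastrang}) to get a norm ball $B$ with $\cM B\subseteq(\rho+\tfrac12(1-\rho))B$, and then approximates $B$ from inside by the $1$-sublevel set of a finite sum $\sum_i (v_i^Ty/v_i^Tx_i)^{2d}$ of high even powers of linear forms, where the $v_i$ come from supporting hyperplanes and a compactness/covering argument on $\partial B$; the strict decrease follows from the geometric sandwich $\cM S_d\subseteq \cM B\subseteq \inter(S_d)$. You instead build $p$ directly as $\sum_\sigma \|A_\sigma x\|^{2d}$ over all finite products, using the elementary bound $\|A_\sigma\|\le C\hat\rho^{|\sigma|}$ and choosing $2d$ large enough that the decay $\hat\rho^{2dk}$ beats the branching factor $m^k$; the Lyapunov inequality is then the one-line telescoping identity $\sum_j p(A_jx)=p(x)-\|x\|^{2d}$, which in fact gives the quantitative decrease $p(A_jx)\le p(x)-\|x\|^{2d}$. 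All the delicate steps check out: absolute convergence justifies the reindexing, uniform convergence on the sphere plus finite-dimensionality of the space of degree-$2d$ forms gives polynomiality of the limit, and convexity, homogeneity and positive definiteness pass to the limit termwise. Your argument is more self-contained (no extremal norm needed) and yields an explicit degree bound $2d>\log m/\log(1/\hat\rho)$. The one thing the paper's construction buys that yours does not deliver verbatim is the specific representation of $p$ as a \emph{finite} sum of even powers of linear forms, which the proof of Theorem~\ref{thm:existence-sos-convex-poly} reuses to get sos-convexity for free; your $p$ is instead a limit of sums of $d$-th powers of psd quadratic forms, which is still sos and sos-convex (the relevant cones are closed in fixed degree), but that would need to be said explicitly if one wanted to feed your $p$ into the next theorem.
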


Our proof is inspired by \cite{mason-boscain-chitour06}, which proves the existence of a convex polynomial Lyapunov function for continuous time switched systems, but we are not aware of an equivalent statement in discrete time. We \rmj{will} also need the following classical result, which to the best of our knowledge first appears in~\cite{RoSt60}.

\begin{theorem}[see \cite{RoSt60,jungers_lncis}]\label{thm-rotastrang}
Consider a set of matrices $\cM\subset \re^{n\times n}$ with JSR $\rho$. For all $\epsilon>0$, there exists a vector norm $|\cdot|_\epsilon$ in $\re^n$ such that for any \rmj{matrix} $A$ in $\cM,$
$$|x|_\epsilon \leq 1 \quad \Rightarrow \quad |Ax|_\epsilon \leq \rho + \epsilon.$$
\end{theorem}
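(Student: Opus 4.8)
The plan is to construct the norm explicitly as a rescaled ``worst-case forward orbit'' norm, following the classical approximate-extremal-norm argument. Set $\hat\rho:=\rho+\epsilon$, fix any reference norm $\|\cdot\|$ on $\re^n$, and define
\[
|x|_\epsilon \ :=\ \sup_{k\ge 0}\ \max_{A_{i_1},\dots,A_{i_k}\in\cM}\ \frac{\|A_{i_1}\cdots A_{i_k}x\|}{\hat\rho^{\,k}},
\]
where the $k=0$ term is read as $\|x\|$. Equivalently, letting $T$ be the set of all finite products of matrices from $\cM$ (including the empty product $I$) and $|P|$ the length of a product $P$, we have $|x|_\epsilon=\sup_{P\in T}\|Px\|/\hat\rho^{|P|}$. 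Three things then need to be checked: finiteness of the supremum (well-definedness), that $|\cdot|_\epsilon$ is a norm, and the claimed contraction.

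For finiteness, I would scale the matrices: the set $\{A/\hat\rho:A\in\cM\}$ has $\mathrm{JSR}$ equal to $\rho/\hat\rho<1$, by homogeneity of the joint spectral radius under scalar multiplication (immediate from the defining limit~\eqref{eq:JSR}). From the definition of the JSR this yields constants $c<1$ and an integer $K$ such that every product of the scaled matrices of length $k\ge K$ has norm at most $c^k$; hence for $k\ge K$ the corresponding terms are bounded by $c^k\|x\|\le\|x\|$, while the finitely many shorter lengths contribute only boundedly many bounded terms. Thus the supremum is finite. Since the $k=0$ term already gives $|x|_\epsilon\ge\|x\|$, the function is positive definite, and both homogeneity and the triangle inequality are inherited directly from $\|\cdot\|$ by passing to the supremum (a supremum of subadditive functions is subadditive). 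Hence $|\cdot|_\epsilon$ is a genuine norm.

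The contraction is the crux but is short. For any $A_j\in\cM$,
\[
|A_j x|_\epsilon=\sup_{P\in T}\frac{\|P A_j x\|}{\hat\rho^{|P|}}=\hat\rho\,\sup_{P\in T}\frac{\|(P A_j)x\|}{\hat\rho^{|P A_j|}}\le \hat\rho\,\sup_{Q\in T}\frac{\|Qx\|}{\hat\rho^{|Q|}}=\hat\rho\,|x|_\epsilon,
\]
where the inequality uses only that right-multiplication by $A_j$ maps $T$ into itself, i.e. $\{PA_j:P\in T\}\subseteq T$. This gives $|A_j x|_\epsilon\le(\rho+\epsilon)|x|_\epsilon$ for all $x$, which is exactly the stated implication after restricting to $|x|_\epsilon\le 1$.

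The main obstacle is the finiteness of the supremum; the norm axioms and the contraction are essentially bookkeeping once the construction is written down. Finiteness rests entirely on the \emph{strict} inequality $\rho/\hat\rho<1$ --- which is precisely why one cannot in general take $\epsilon=0$, since an exact extremal norm need not exist --- together with the standard fact, extracted from the limit defining the JSR, that a set of matrices with joint spectral radius strictly below one has all products decaying geometrically. I would therefore isolate and prove that geometric-decay statement cleanly, as it is the only non-formal ingredient in the argument.
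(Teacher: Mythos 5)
Your proof is correct and is precisely the classical Rota--Strang construction (the rescaled worst-case forward-orbit norm $\sup_{P}\|Px\|/(\rho+\epsilon)^{|P|}$) that the paper relies on by citation to \cite{RoSt60,jungers_lncis} rather than proving itself. All three steps --- finiteness via the strict inequality $\rho/(\rho+\epsilon)<1$, the norm axioms, and the contraction from closure of the product set under right multiplication --- are handled correctly, so there is nothing to add.
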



\begin{proof}(of Theorem \ref{thm:existence-convex-poly}.) Let $\cM\mathrel{\mathop:}=\{A_1,\ldots,A_m\}$ and denote the JSR of $\cM$ by $\rho$. By assumption we have $\rho<1$ and by Theorem \ref{thm-rotastrang}, there exists a norm, which from here on we simply denote by $|\cdot|$, such that $\forall i\in\{1,\ldots,m\},$
$$|x| \leq 1  \Rightarrow \quad |A_ix| \leq \rho + \frac12 (1-\rho). $$ 
We denote the unit ball of this norm by $B$ and use the notation $$\cM B\mathrel{\mathop:}=\{A_ix|~A_i\in\cM \mbox{ and } x\in B\}.$$ Hence, we have $\cM B \subseteq (\rho + \frac12 (1-\rho)) B . $ 

The goal is to construct a convex positive definite homogeneous polynomial $p_d$ of some degree $2d$, such that its 1-sublevel set $S_d$ satisfies $$(\rho + \frac12(1-\rho)) B \subseteq int(S_d) \subset S_d \subseteq B.$$ As $S_d \subseteq B$ and $\cM B \subseteq (\rho + \frac12 (1-\rho)) B$, it would follow that $$\cM S_d \subseteq \cM B \subseteq int(S_d).$$ This would imply that $p_d(A_ix)<p(x),\forall x \in \partial S_d,$ and for $i=1,\ldots,m.$ By homogeneity of $p_d$, we get the claim in the statement of the theorem.

To construct $p_d$, we proceed in the following way. Let 
$$C=\{x\in \mathbb{R}^n |~|x|\leq \rho + \frac{3}{4}(1-\rho)\}.$$ To any $x\in \partial C,$ we associate a (nonzero) dual vector $v(x)$ orthogonal to a supporting hyperplane of $C$ at $x$. This means that $\forall y \in C,\ v(x)^Ty\leq v(x)^Tx$. Since $x\in \inter{B},$ the set $$ S(x)=\{y\in \mathbb{R}^n|~v(x)^Ty>v(x)^Tx \mbox{ and }|y|=1\}$$ is a relatively open nonempty subset of the boundary $\partial B$ of our unit ball. Moreover, $\frac{x}{|x|}\in S(x).$ Now, the family of sets $S(x)$ is an open covering of  $\partial B,$ and hence we can extract a set of points $x_1,\dots,x_N$ such that the union of the sets $S(x_i)$ covers  $\partial B.$ Let $v_i\mathrel{\mathop{:}}=v(x_i).$ For any natural number $d$, we define \footnote{Note that $v_i^Tx_i \neq 0$. In fact, we have $v_i^Tx_i>0$, $\forall i$. Indeed, there exists $\alpha_i>0$ such that $\alpha_iv_i \in C$ and hence $v_i^Tx_i \geq \alpha_i ||v_i||_2^2>0.$} $$p_d(y)=\sum_{i=1}^N \left(\frac{v_i^Ty}{v_i^Tx_i}\right)^{2d} \text{ and } S_d=\{y \in \mathbb{R}^n|~ p_d(y)\leq 1\}.$$
Note that $p_d$ is convex as the sum of even powers of linear forms and homogeneous.
We first show that $$(\rho + \frac12(1-\rho)) B \subseteq int(S_d).$$ As $(\rho + \frac12(1-\rho)) B \subset int(C)$, for all $y \in (\rho + \frac12(1-\rho)) B$ and for all $i=1,\ldots,N$, we have $v_i^Ty < v_i^Tx_i$. Hence there exists a positive integer $d$ such that $$\left(\max_i \max_{y \in (\rho +\frac12 (1-\rho))B} \frac{v_i^Ty}{v_i^Tx_i}\right)^{2d}<\frac{1}{N}$$
and so $p_d(y)<1$ for all $y \in (\rho + \frac12(1-\rho)) B$.

We now show that $S_d \subseteq B.$ Let $y \in S_d$, and so $p_d(y)\leq 1.$ This implies that $$\frac{v_i^Ty}{v_i^Tx_i} \leq 1, \forall i=1,\ldots,N.$$ From this, we deduce that $y \notin \partial B.$ Indeed if $y \in \partial B,$ there exists $i \in \{1,\ldots,N\}$ such that $y \in S(x_i)$, which implies that $v_i^T y>v_i^Tx_i$ and contradicts the previous statement. Hence $\partial B \cap S_d =\emptyset$. As both $B$ and $S_d$ contain the zero vector, we conclude that $S_d \subseteq int(B) \subseteq B$. Note that this guarantees positive definiteness of $p_d$ as $p_d$ is homogeneous and its 1-sublevel set is bounded. \end{proof}

\subsection{Existence of sos-convex polynomial Lyapunov functions} \label{subsec:sos-convex}

We now strengthen the converse result of the previous subsection by showing that asymptotically stable switched linear systems admit an \emph{sos-convex} Lyapunov function. This in particular implies that such a Lyapunov function can be found with semidefinite programming.

Recall that a homogeneous polynomial $h$ is said to be \emph{positive definite (pd)} if $h(x)>0$ for all $x\neq 0$.

\begin{theorem}\label{thm:existence-sos-convex-poly}
If the switched linear system (\ref{eq:switched.linear.system}) is asymptotically stable under arbitrary switching, then there exists a homogeneous polynomial $q$ that satisfies the sum of squares constraints
\begin{equation}\nonumber
\begin{array}{ll}
q(x) & \mbox{sos-convex},\\
q(x)-q(A_jx) & \mbox{sos for}\ j=1,\ldots,m.
\end{array}
\end{equation}
Moreover, this polynomial $q$ is positive definite and is such that the $m$ polynomials $q(x)-q(A_jx)$ are also positive definite.
\end{theorem}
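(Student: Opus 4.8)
The plan is to reduce everything to a single, explicitly constructed form that exploits a telescoping identity, rather than to post-process the convex polynomial produced by Theorem~\ref{thm:existence-convex-poly}. First I would record two elementary facts that make the sos-convexity requirement essentially free: for any $P\succeq 0$ the form $(x^TPx)^d$ is sos-convex (reduce to $\|x\|^{2d}$ by an invertible linear change of variables, which preserves sos-convexity, and use closedness of the sos-convex cone to pass to singular $P$), and the cones of sos forms and of sos-convex forms of a fixed degree are closed. The genuinely delicate requirement is the second one: I need $q(x)-q(A_jx)$ to be \emph{honestly} sos, not merely positive definite. This is exactly where the naive route stalls — one may take $q$ to be the sos-convex form $p_d$ from the proof of Theorem~\ref{thm:existence-convex-poly} (a sum of even powers of linear forms), but then $p_d(x)-p_d(A_jx)$ is only guaranteed positive definite, and clearing Reznick's denominator $\|x\|^{2r}$ cannot be folded back into a single $q$ because the multiplier transforms under $A_j$.

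The construction I would use instead is a polynomialized trajectory sum. Let $\rho=\rho(\cM)<1$ and fix $d$ large enough that $m\rho^{2d}<1$; this is possible since $\rho<1$. For a word $w=(i_1,\dots,i_k)\in\{1,\dots,m\}^k$ write $A_w=A_{i_k}\cdots A_{i_1}$, and set
$$q(x):=\sum_{k=0}^{\infty}\ \sum_{w\in\{1,\dots,m\}^k}\|A_wx\|^{2d}=\sum_{k=0}^{\infty}\ \sum_{w}\big(x^TA_w^TA_wx\big)^{d}.$$
The main point to verify — and the step I expect to be the crux — is that this infinite sum is genuinely a polynomial. Each summand is a form of the single fixed degree $2d$, so the partial sums live in the finite-dimensional space of degree-$2d$ forms; the coefficient vector of $\sum_{|w|=k}\|A_wx\|^{2d}$ is bounded by a constant times $m^k\max_{|w|=k}\|A_w\|^{2d}$, and the choice $m\rho^{2d}<1$ together with $\max_{|w|=k}\|A_w\|^{1/k}\to\rho$ makes this geometrically summable. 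Hence $q$ is a well-defined form of degree $2d$.

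It then remains to check the three properties, all of which follow cleanly. Positive definiteness: the $k=0$ term is $\|x\|^{2d}$ and every other term is nonnegative, so $q(x)\geq\|x\|^{2d}>0$ for $x\neq0$. Sos-convexity: each summand $(x^TA_w^TA_wx)^d$ is sos-convex by the preliminary fact, the partial sums are therefore sos-convex, and $q$ is their limit, so closedness of the sos-convex cone gives sos-convexity of $q$. Finally, the Lyapunov differences: reindexing words via the bijection $(w,i)\mapsto A_wA_i$ (which hits every product of length $k+1$ exactly once) yields the telescoping identity $\sum_{i=1}^m q(A_ix)=q(x)-\|x\|^{2d}$, and therefore
$$q(x)-q(A_jx)=\|x\|^{2d}+\sum_{i\neq j}q(A_ix).$$
Since $\|x\|^{2d}$ is sos and each $q(A_ix)$ is sos (as $q$ is a convergent sum of the sos forms $\|A_wx\|^{2d}$, and sos is preserved under the linear substitution $x\mapsto A_ix$), the right-hand side is a finite sum of sos forms and hence sos; the same identity shows $q(x)-q(A_jx)\geq\|x\|^{2d}>0$, giving positive definiteness of the differences. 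The only genuinely technical matters are the polynomiality/convergence argument above and the two supporting lemmas, which let the telescoping identity do all the real work and completely sidestep the multiplier obstruction.
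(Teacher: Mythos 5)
Your proof is correct, but it takes a genuinely different route from the paper's. The paper starts from the convex form $p(x)=\sum_i(a_i^Tx)^{2d}$ built in the proof of Theorem~\ref{thm:existence-convex-poly} --- for which the differences $p(x)-p(A_jx)$ are only known to be \emph{positive definite}, not sos --- and then ``sos-ifies'' them by invoking Scheiderer's Positivstellensatz (Theorem~\ref{thm:claus}) twice to produce exponents $K,K'$, setting $q=p^k$ with $k=\max\{2K+1,2K'+1\}$, and expanding $q(x)-q(A_jx)$ via the factorization $a^k-b^k=(a-b)\sum_{l}a^{k-1-l}b^l$ so that every term in the resulting sum contains one of the two Scheiderer-certified sos products. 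Your trajectory sum $q(x)=\sum_{w}\|A_wx\|^{2d}$ bypasses both Theorem~\ref{thm:existence-convex-poly} and Scheiderer's theorem entirely: the only inputs are the convergence estimate coming from $m\rho^{2d}<1$ together with $\max_{|w|=k}\|A_w\|^{1/k}\to\rho$, the closedness of the sos and sos-convex cones in fixed degree, the invariance of both properties under arbitrary (not necessarily invertible) linear substitutions $x\mapsto Ax$ --- which, incidentally, is a cleaner way to see that each $\|A_wx\|^{2d}$ is sos-convex than your change-of-variables-plus-limit argument, though that works too --- and the one-step shift identity $\sum_i q(A_ix)=q(x)-\|x\|^{2d}$, justified by nonnegativity of all terms. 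What your route buys is an elementary, self-contained argument with an effective degree bound ($2d>\log m/\log(1/\rho)$) and an explicit quantitative margin $q(x)-q(A_jx)\geq\|x\|^{2d}$, whereas the paper's final degree $2dk$ depends on the integers $K,K'$ from Scheiderer's theorem, which are not effective. What the paper's route buys is a reusable post-processing principle --- the convex Lyapunov function already in hand can be upgraded to an sos-convex one simply by raising it to a power --- and an illustration of how Positivstellensatz certificates interact with the telescoping identity. Your construction is essentially the trajectory-sum relative of the Parrilo--Jadbabaie argument for (non-convex) sos Lyapunov functions, augmented with the key observation that each summand is sos-convex and that sos-convexity survives passage to the limit; both proofs are complete and valid.
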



Our proof will make crucial use of the following Positivstellensatz due to Scheiderer. 

\begin{theorem}[Scheiderer~\cite{Claus_Hilbert17}] \label{thm:claus}
Given any two positive definite homogeneous polynomials $h$ and
$g$, there exists a positive integer $N$ such that $hg^N$ is sos.
\end{theorem}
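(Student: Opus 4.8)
The plan is to read the statement as a \emph{denominator-clearing} Positivstellensatz over a compact base, and to prove it by separating the elementary part (which follows from Reznick's theorem) from the essential part (which requires Scheiderer's local--global principle for sums of squares).

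First I would record the structural facts. Positive definiteness forces $h$ and $g$ to have even degrees, say $2a$ and $2b$; and, being continuous and nonvanishing on the compact unit sphere $S^{n-1}$, each is bounded below there by a positive constant. Consequently the degree-zero rational function $r(x)=h(x)/g(x)^{a/b}$ descends to a strictly positive continuous function on the compact real projective space $\mathbb{P}^{n-1}(\mathbb{R})$. This is the conceptual reason the theorem can hold at all: although $hg^N$ vanishes to high order at the origin (so it is far from a generic positive form), all of the relevant positivity is controlled over a \emph{compact} base.

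Next I would pin down exactly what the cheap tools deliver, to expose where the real work lies. By Reznick's theorem each of the positive definite forms $h,g$ becomes sos after multiplication by a high power of $\|x\|^2$, so $\|x\|^{2M_1}h$ and $\|x\|^{2M_2}g$ are sos for suitable $M_1,M_2$. Since products and powers of sos forms are sos, $\|x\|^{2(M_1+M_2N)}\,hg^N$ is sos for every $N$. Hence the theorem is \emph{equivalent} to the assertion that the innocuous factor $\|x\|^{2K}$ may be traded for raising the power of $g$. Crucially, this cannot be achieved by naive cancellation: one cannot in general remove a factor of $\|x\|^2$ from a sum of squares (the Motzkin form is not sos while $\|x\|^2$ times it is), so the content of the theorem is precisely that an \emph{arbitrary} positive definite form $g$ can play the regularizing role that $\|x\|^2$ plays in Reznick's theorem.

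The crux, and the step I expect to be the main obstacle, is supplying that regularization for a general $g$. Here I would pass to the projective picture and invoke Scheiderer's local--global principle for sums of squares on compact real varieties: forms of degree $d$ are the global sections of $\mathcal{O}(d)$ on $\mathbb{P}^{n-1}$, multiplication by $g^N$ twists the degree by $2bN$, and since $r$ is strictly positive with no real zeros on the compact $\mathbb{P}^{n-1}(\mathbb{R})$, the only obstruction to an sos representation is asymptotic in nature and is annihilated once the twist is large enough. Scheiderer's theorem guarantees that for all sufficiently large $N$ this holds, so $hg^N$ is a genuine sum of squares of forms of degree $a+bN$. This local--global step rests on the theory of the abstract real spectrum and on the compactness of the real projective space, and is where all the depth resides; the two reductions above are routine. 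Combining them gives the claim for all sufficiently large $N$.
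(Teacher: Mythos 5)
You should first note what the paper actually does with this statement: it gives \emph{no proof}. Theorem~\ref{thm:claus} is imported verbatim from Scheiderer's work~\cite{Claus_Hilbert17} and used as a black box in the proof of Theorem~\ref{thm:existence-sos-convex-poly}. So the only question is whether your argument constitutes an independent proof, and it does not. Your Reznick step is correct ($||x||^{2M_1}h$ and $||x||^{2M_2}g$ are sos, hence so is $||x||^{2K}hg^N$), but it achieves no reduction: ``trading the factor $||x||^{2K}$ for a higher power of $g$'' is just a reformulation of the statement, and, as you yourself observe via the Motzkin form, factors of $||x||^2$ cannot be cancelled from an sos decomposition. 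The crux is then delegated wholesale to ``Scheiderer's local--global principle,'' and this is where the argument becomes circular: in the graded setting you describe ($hg^N$ viewed as a section of $\mathcal{O}(2a+2bN)$, to be written as a sum of squares of forms of degree $a+bN$), the assertion that strict positivity on the compact $\mathbb{P}^{n-1}(\mathbb{R})$ plus a sufficiently large twist implies sos \emph{is} Scheiderer's projective Positivstellensatz, i.e., precisely the theorem to be proven. Strict positivity alone buys nothing in this setting: adding $\epsilon(x_1^2+x_2^2+x_3^2)^3$ to the Motzkin form gives a positive definite ternary sextic---strictly positive on $\mathbb{P}^{2}(\mathbb{R})$---that is not sos, so a genuine theorem explaining why the specific multiplier $g^N$ removes the obstruction must carry all the weight, and you assert it rather than prove it. (A side slip: $h/g^{a/b}$ is not a rational function unless $b$ divides $a$; the degree-zero object you want is $h^b/g^a$.)

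Moreover, the ``local--global'' framing fails even if taken literally. On $\mathbb{P}^{n-1}(\mathbb{R})$ the section $hg^N$ has no real zeros, so a local--global principle has no local conditions to check and would degenerate to the false implication ``strictly positive $\Rightarrow$ sos'' just refuted. On the affine cone $\mathbb{R}^n$ (the natural home of the problem, where $hg^N$ has its unique real zero at the origin), the compactness hypothesis of the principle fails, and the local condition at the cone point is in any case no easier than the global problem: if a form $f$ of degree $2d$ satisfies $f=\sum_i q_i^2$ with $q_i\in\mathbb{R}[[x_1,\ldots,x_n]]$, then comparing lowest-degree homogeneous components (squares of the lowest parts cannot cancel) shows $f$ is already a sum of squares of degree-$d$ forms; so sos in the completed local ring at $0$ is \emph{equivalent} to sos of forms, and nothing has been localized away. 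A genuine proof must explain why multiplying by $g^N$ helps---for instance by working in the degree-zero subring of the localization at $g$, where compactness of $\mathbb{P}^{n-1}(\mathbb{R})$ makes Archimedean/Schm\"udgen-type positivity results available, and then doing nontrivial work to clear denominators back to a statement about $hg^N$. None of that appears in your sketch; what you have is a correct but non-reductive preamble followed by a citation of the result itself.
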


\begin{proof}(of Theorem \ref{thm:existence-sos-convex-poly}). We have already shown in the proof of Theorem~\ref{thm:existence-convex-poly} that under our assumptions, there exist vectors $a_1,\ldots,a_N\in\mathbb{R}^n$ and a positive integer $d$ such that the convex form $p(x)=\sum_{i=1}^N (a_i^Tx)^{2d}$ is positive definite and makes the $m$ forms $p(x)-p(A_jx)$ also positive definite. Note also that as a sum of powers of linear forms, $p$ is already sos-convex and sos. 
Let $S^{n-1}$ denote the unit sphere in $\mathbb{R}^n$ and define $$\alpha_j\mathrel{\mathop{:}}=\frac{1}{2} \min_{x\in S^{n-1}} p(x)-p(A_jx).$$ By definition of $\alpha_j$, we have that \rmj{$p(x)-p(A_jx)-\alpha_j (\sum_i x_i^2)^d$} is positive definite. Furthermore, as $\alpha_j>0$ (this is a consequence of $p(x)-p(A_jx)$ being positive definite) and as $p(A_jx)$ is sos, we get that $p(A_jx)+\alpha_j (\sum_i x_i^2)^d$ is positive definite. \rmj{Hence, from Theorem~\ref{thm:claus},} there exist an integer $K$ such that 
	\begin{align} \label{eq:claus.p}
		\left(p(x)-p(A_jx)-\alpha_j (\sum_i x_i^2)^d\right)\cdot p(x)^K
	\end{align}
 is sos and an integer $K'$ such that 
 \begin{align}\label{eq:claus.pAj}
 \left(p(x)-p(A_jx)-\alpha_j (\sum_i x_i^2)^d\right) \cdot \left(p(A_jx)+\alpha_j (\sum_i x_i^2)^d\right)^{K'}
 \end{align}  is sos.
	
Take $k\mathrel{\mathop{:}}=\max\{2K+1,2K'+1\}$ and define $q(x)=p(x)^k.$ It is easy to see that $q$ is positive definite as $p$ is positive definite. We first show that $q$ is sos-convex. We have 
$$\nabla^2 q(x)=k(k-1)p(x)^{k-2}\nabla p(x)\nabla p(x)^T+kp(x)^{k-1}\nabla^2 p(x).$$
As $p$ is sos, any power of it is also sos. Furthermore, we have $$\nabla^2 p(x)=\sum_{i=1}^N 2d(2d-1)(a_i^Tx)^{2d-2}a_ia_i^T,$$ which implies that there exists a polynomial matrix $V(x)$ such that $\nabla^2p(x)=V(x)V(x)^T$. As a consequence, we see that 
$$y^T\nabla^2 q(x)y=k(k-1)p(x)^{k-2} (\nabla p(x)^Ty)^2+kp(x)^{k-1}(V(x)^Ty)^2$$ is a sum of squares and hence $q$ is sos-convex.
	
	We now show that for $j=1,\ldots,m$, the form $q(x)-q(A_jx)$ is positive definite and sos. For positive definiteness, simply note that as $p(x)>p(A_jx)$ for any $x \neq 0$ and $p$ is nonnegative, we get $p^k(x)>p^k(A_jx)$ for any $k\geq 1$ and $x\neq 0.$ 
	
	To show that $q(x)-q(A_jx)$ is sos, we make use of the following identity:
	\begin{align}\label{eq:identity}
	a^k-b^k=(a-b)\sum_{l=0}^{k-1} a^{k-1-l}b^l.
	\end{align}
Applying (\ref{eq:identity}), we have 
	\begin{align}
	&p^k(x)-\left(p(A_jx)+\alpha_j (\sum_i x_i^2)^d\right)^k \nonumber \\ &=\left(p(x)-p(A_jx)-\alpha_j (\sum_i x_i^2)^d\right)\cdot \sum_{l=0}^{k-1}p(x)^{k-1-l}\left(p(A_jx)+\alpha_j (\sum_i x_i^2)^d\right)^l\nonumber\\
	&=\sum_{l=0}^{k-1}\left(p(x)-p(A_jx)-\alpha_j (\sum_i x_i^2)^d\right)\cdot p(x)^{k-1-l}\left(p(A_jx)+\alpha_j (\sum_i x_i^2)^d\right)^l. \label{eq:sum}
	\end{align}
	For any $l \in \{0,\ldots,k-1\}$, either $k-1-l\geq \frac{k-1}{2}$ or $l \geq \frac{k-1}{2}$. Suppose that the index $l$ is such that $k-1-l \geq \frac{k-1}{2}$: by definition of $k$, this implies that $k-1-l\geq K$. Since the polynomial in (\ref{eq:claus.p}) sos and since $(p(A_jx)+\alpha_j (\sum_i x_i^2)^d)^l$ is sos, we get that the term 	
	$$\left(p(x)-p(A_jx)-\alpha_j (\sum_i x_i^2)^d\right)\cdot p(x)^{k-1-l}\left(p(A_jx)+\alpha_j (\sum_i x_i^2)^d\right)^l$$
	in the sum (\ref{eq:sum}) is sos. Similarly, if the index $l$ is such that $l \geq \frac{k-1}{2}$, we have that $l \geq K'$ by definition of $k.$ Sine the polynomial in (\ref{eq:claus.pAj}) is sos, we come to the conclusion that the term
	$$\left(p(x)-p(A_jx)-\alpha_j (\sum_i x_i^2)^d\right)\cdot p(x)^{k-1-l}\left(p(A_jx)+\alpha_j (\sum_i x_i^2)^d\right)^l$$
	in the sum (\ref{eq:sum}) is sos. When summing over all possible $l \in \{0,\ldots,k-1\}$, as each term in the sum is sos, we conclude that the sum $$ p^k(x)-\left(p(A_jx)+\alpha_j (\sum_i x_i^2)^d\right)^k$$ itself is sos. Now note that we can write
	\begin{align*}
	&p^k(x)-\left(p(A_jx)+\alpha_j (\sum_i x_i^2)^d\right)^k=p(x)^k-p(A_j)^k -\sum_{s=1}^k \binom{k}{s} \alpha_j^s (\sum_i x_i^2)^{ds}\cdot p(A_jx)^{k-s},
	\end{align*}
	which enables us to conclude that $$q(x)-q(A_jx)=p^k(x)-\left(p(A_jx)+\alpha_j (\sum_i x_i^2)^d\right)^k+\sum_{s=1}^k \binom{k}{s} \alpha_j^s (\sum_i x_i^2)^{ds}\cdot p(A_jx)^{k-s}$$
	is sos as the sum of sos polynomials.
\end{proof}

%

\subsection{Non-existence of a uniform bound on the degree of convex polynomial Lyapunov functions}

It is known that there are families of $n\times n$ matrices $\cM=\{A_1,\ldots,A_m\}$ for which the switched linear system (\ref{eq:switched.linear.system}) is asymptotically stable under arbitrary switching, but such that the minimum degree of a common polynomial Lyapunov function is arbitrarily large~\cite{aaa_raph_lower_bounds}. (In fact, this is the case already when $m=n=2$.) In the case where $\cM$ admits a common polynomial Lyapunov function of degree $d$, it is natural to ask whether one can expect $\cM$ to also admit a common \emph{convex} polynomial Lyapunov function of some degree $\hat{d}$, where $\hat{d}$ is a function of $d,n,m$ only? In this subsection, we answer this question in the negative.

%
%

Consider the set of matrices $\mathcal{A}=\{A_1,A_2\},$ with
\begin{equation}\label{eq:A1,A2.ando.shi}
A_{1}=\left[
\begin{array}
[c]{cc}%
1 & 0\\
1 & 0
\end{array}
\right]  ,\text{ }A_{2}=\left[
\begin{array}
[c]{cr}%
0 & 1\\
0 & -1
\end{array}
\right].
\end{equation}
This is a benchmark set of matrices that has been studied
in~\cite{Ando98},~\cite{Pablo_Jadbabaie_JSR_journal}
mainly because it provides a ``worst-case'' example for the method of common
quadratic Lyapunov functions. Indeed, it is easy to show that
$\rho(\mathcal{A})=1$, but a common quadratic Lyapunov function can only produce an upper bound of $\sqrt{2}$ on the JSR. In~\cite{Pablo_Jadbabaie_JSR_journal}, Parrilo and Jadbabaie give a simple degree-4 (non-convex) common polynomial Lyapunov function that proves stability of the switched linear system defined by the matices $\{\gamma A_1, \gamma A_2\},$ for any $\gamma<1$. In sharp contrast, we show the following:



\begin{theorem}\label{thm:degree.higher}
Let $A_1,A_2$ be as in (\ref{eq:A1,A2.ando.shi}) and consider the sets of matrices $\cM_\gamma=\{\gamma A_1, \gamma A_2\}$ parameterized by a scalar $\gamma <1$. As $\gamma\rightarrow 1$, the minimum degree of a common convex polynomial Lyapunov function for $\cM_\gamma$ goes to infinity. 
\end{theorem}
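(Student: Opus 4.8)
The plan is to argue by contradiction using a compactness argument, exploiting the fact that at the critical value $\gamma=1$ the set $\{A_1,A_2\}$ admits a \emph{unique} invariant symmetric convex body, which happens to be a square and therefore cannot be the sublevel set of any polynomial. Note first that by Theorem~\ref{thm:existence-convex-poly} the minimum degree $D(\gamma)$ is finite for every $\gamma<1$, so the statement is well posed. I take the Lyapunov functions to be homogeneous, as in Theorem~\ref{thm:existence-convex-poly}; the non-homogeneous case reduces to this one by passing to the top-degree form, which is convex and positive semidefinite and obeys the corresponding (non-strict) inequalities, so it feeds into exactly the same limiting object analyzed below. Since the minimum degree ranges over the finitely many even values below any given bound, it suffices to show that for each fixed even degree $2d$ there is a threshold $\gamma_d<1$ such that no homogeneous convex positive definite polynomial of degree $2d$ is a common Lyapunov function for $\cM_\gamma$ once $\gamma>\gamma_d$. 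So I suppose for contradiction that there is a sequence $\gamma_j\to 1^-$ and homogeneous convex positive definite polynomials $p_j$ of degree $2d$ with $p_j(\gamma_j A_i x)<p_j(x)$ for all $x\neq 0$ and $i=1,2$.

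First I would pass to a limit. Normalizing so that $\max_{\|x\|=1}p_j(x)=1$, and using that the degree-$2d$ forms in two variables form a finite-dimensional space on which $p\mapsto\max_{\|x\|=1}|p(x)|$ is a norm, the coefficients of $p_j$ stay bounded and I may extract a subsequence $p_j\to p^{*}$. The limit $p^{*}$ is a nonzero homogeneous form of degree $2d$, convex as a pointwise limit of convex functions, and positive semidefinite; writing the hypothesis as $\gamma_j^{2d}p_j(A_i x)<p_j(x)$ and letting $j\to\infty$ yields $p^{*}(A_i x)\le p^{*}(x)$ for all $x$ and $i=1,2$.

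The heart of the argument, and the step I expect to be the main obstacle, is to rule out degeneration of this limit, i.e.\ to show $p^{*}$ is positive definite. If it were not, its zero set $Z=\{p^{*}=0\}$ would be a nontrivial subspace: convexity together with evenness forces $Z$ to be closed under addition and negation, and since $0<\dim Z<2$ it must be a line. From $p^{*}(A_i x)\le p^{*}(x)$ and positive semidefiniteness one gets $p^{*}(A_i x)=0$ whenever $x\in Z$, hence $A_i Z\subseteq Z$, so $Z$ would be a line invariant under both $A_1$ and $A_2$. A direct computation shows the invariant lines of $A_1$ are $\mathrm{span}(1,1)$ and $\mathrm{span}(0,1)$, while those of $A_2$ are $\mathrm{span}(1,-1)$ and $\mathrm{span}(1,0)$, and these share no common line, a contradiction. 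This is exactly the step that fails for \emph{non-convex} candidates, which can degenerate (for instance toward $(x_1^2-x_2^2)^2$) and thereby remain low-degree; it is also where the specific structure of the matrices in (\ref{eq:A1,A2.ando.shi}) is essential.

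With $p^{*}$ positive definite, $\Omega=\{p^{*}\le 1\}$ is a symmetric convex body satisfying $A_i\Omega\subseteq\Omega$, and I would next identify $\Omega$ as a square. Setting $a=\max_{\Omega}x_1$ and applying $A_1$ to a maximizer gives $(a,a)\in\Omega$; setting $b=\max_{\Omega}x_2$ and applying $A_2$ gives $(b,-b)\in\Omega$. These force $a=b$ and place all four points $(\pm a,\pm a)$ in $\Omega$, so by convexity and the inclusion $\Omega\subseteq[-a,a]^2$ we conclude $\Omega=[-a,a]^2$. The final contradiction is that a homogeneous polynomial cannot have a square as its unit sublevel set: the boundary would contain the segment $\{x_1=a,\ |x_2|\le a\}$, on which $p^{*}(a,x_2)\equiv 1$, forcing the univariate polynomial $x_2\mapsto p^{*}(a,x_2)$ to be constant and contradicting $p^{*}(a,x_2)\to\infty$ (its leading coefficient $p^{*}(0,1)$ being positive by positive definiteness). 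This produces the threshold $\gamma_d$, and letting $d$ grow shows $D(\gamma)\to\infty$ as $\gamma\to 1$.
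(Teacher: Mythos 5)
Your proof is correct, and it follows the same broad strategy as the paper's (reduce to the critical case $\gamma=1$, show the only candidate invariant convex body for $\{A_1,A_2\}$ is a square, and observe that a square cannot be a polynomial sublevel set), but the execution differs in two substantive ways, both to your credit. First, the paper compresses the reduction into one sentence --- ``a uniform degree bound $D$ would imply the existence of an invariant sublevel set of a convex degree-$D$ polynomial for $\{A_1,A_2\}$ itself'' --- leaving the compactness argument implicit; you supply it in full, including the normalization on the finite-dimensional space of forms and, crucially, the non-degeneracy step showing the limit $p^{*}$ is positive definite because its zero set would otherwise be a common invariant line of $A_1$ and $A_2$ (and none exists). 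This is genuinely the delicate point of the reduction, and your remark that non-convex candidates \emph{can} degenerate is exactly why the theorem contrasts with the degree-4 non-convex Lyapunov function of \cite{Pablo_Jadbabaie_JSR_journal}. Second, to identify the square you use a direct extremal argument on the convex body $\Omega$ (maximize $x_1$, push the maximizer through $A_1$ to get the corner $(a,a)$, and symmetrically with $A_2$), whereas the paper instead exhibits explicit products in the semigroup $\cA^*$ (see (\ref{eqn-ando-semigroup})) and maps boundary points onto one another; your route is shorter and avoids the semigroup computation, while the paper's yields the slightly stronger statement that the square is the \emph{only} convex invariant set up to scaling. The final contradiction (a polynomial constant on a segment is constant on the whole line, incompatible with positive definiteness) is common to both, though again you spell out the detail that the paper leaves to the reader.
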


\begin{proof}
It is sufficient to prove that the set $\{A_1,A_2\}$ has no convex invariant set defined as the sublevel set of a polynomial.  Indeed, if there were a uniform bound $D$ on the degree of a convex polynomial Lyapunov function \rmj{for the sets $\{\gamma A_1,\gamma A_2\},$} this would imply the existence of an invariant set---which is the sublevel set of a convex polynomial function of degree $D$---\rmj{for the set $\{A_1,A_2\}$ itself.}\\
We prove our claim by contradiction.  In fact, we will prove the slightly stronger fact that for these matrices, the only convex invariant set is the unit square $$S=\{(x,y)\in\mathbb{R}^2|\ ||(x,y)||_\infty\leq 1\},$$ or, of course, a scaling of it. 

Let $\mathcal{A}=\{A_1,A_2\}$ and let $\mathcal{A}^*$ denote the set of all matrix products out of $\mathcal{A}$. Suppose for the sake of contradiction that there was a convex bivariate polynomial $p$ whose unit level set was the boundary of an invariant set for the switched system defined by $\mathcal{A}$.  More precisely, suppose we had \begin{equation}\label{eqn-contrex-lyap}\forall x \in \re^2,\ \forall A\in \mathcal{A}, \quad p(Ax) \leq p(x).\end{equation}
Let $x^*\in\mathbb{R}$ be such that \rmj{$$p(x^*,x^*)=1.$$}
It is easy to check that the following matrices can be obtained as products of matrices in $\mathcal{A}$: 
\begin{equation}\label{eqn-ando-semigroup}
\left \{  
\begin{pmatrix}
0 & 1\\
0 & -1
\end{pmatrix},\ \begin{pmatrix}
0 & 1\\
0 & 1
\end{pmatrix}
         ,\
\begin{pmatrix}
0 & -1\\
0 & 1
\end{pmatrix},\right \}\quad \subset \cA^*.
\end{equation}
This implies that \begin{eqnarray}\nonumber p({\bf x})&=&1 \\
\nonumber \mbox{for } {\bf x}&\in&\{(x^*,-x^*),(-x^*,-x^*),(-x^*,x^*)\}\end{eqnarray} as well, because these points can all be mapped onto each other with matrices from (\ref{eqn-ando-semigroup}). 

Suppose that there is an $x>x^*,$ $-x^*<y<x^*,$ such that \rmj{$p(x,y)= 1.$} Then we reach a contradiction because (\ref{eqn-ando-semigroup}) implies that $(x,y)$ can be mapped on $(x,x),$ which contradicts (\ref{eqn-contrex-lyap}) because $x>x^*.$ This implies that $p(x^*,y)\geq 1, \forall y\in (-x^*,x^*)$. However, convexity of $p$ implies that $p(x^*,y)\leq 1, \forall y\in (-x^*,x^*)$. Thus, we have proved that $p(x^*,y)=1,  \forall y\in (-x^*,x^*).$ The same is true for $p(-x^*,y)$ by symmetry.

In the same vein, if there is a $y>x^*,$ $-x^*<x<x^*$ such that $p(x,y)=1,$ this point can be mapped on $(-y,-y),$ which again leads to a contradiction, because $p(-x^*,-x^*)=1.$ Hence,  $p(x,x^*)=p(x,-x^*)=1, \forall x\in(-x^*,x^*),$ which concludes the proof.
\end{proof}
%

\section{SOS-convex Lyapunov functions and switched nonlinear systems}\label{section:nonlinear}

In this section, we turn our attention to stability analysis of switched nonlinear systems

\begin{eqnarray}\label{eq:nl.systems.again}
x_{k+1}&=&\tilde{f}(x_k), \label{eq:switched.nonlinear.system.ex}\\ \nonumber
\tilde{f}(x_k)&\in & conv\{f_1(x_k),\dots, f_m(x_k)\},
\end{eqnarray}
where $f_1,\ldots,f_m: \mathbb{R}^n\rightarrow\mathbb{R}^n$ are continuous and satisfy $f_i(0)=0.$ We start by demonstrating the significance of convexity of Lyapunov functions in this setting. We then consider the case where $f_1,\ldots,f_m$ are polynomials and devise algorithms that under mild conditions find algebraic certificates of local asymptotic stability under arbitrary switching. These algorithms produce a full-dimensional inner approximation to the region of attraction of the origin, which comes in the form of a sublevel set of an sos-convex polynomial.


\subsection{The significance of convexity of the Lyapunov function}\label{subsec:nl.global}
The following example demonstrates that unlike the case of switched linear systems, one \emph{cannot} simply resort to a common Lyapunov function for the maps $f_1,\ldots, f_m$ to infer a proof of stability of a nonlinear difference inclusion.

\begin{example}\label{ex:nonconvex.fails}
Consider the nonlinear switched system (\ref{eq:switched.nonlinear.system.ex}) with $m=n=2$ and
\begin{eqnarray}
f_1(x)&=&(x_1x_2,0)^T,\label{eq:ex.nonconvex.fails}\\\nonumber
f_2(x)&=&(0,x_1x_2)^T.\end{eqnarray}
\rmj{The} function \begin{equation}\label{eq:Lyap.nonconvex}V(x)=x_1^2x_2^2+(x_1^2+x_2^2)
\end{equation} is a common Lyapunov function for both $f_1$ and $f_2$, but nevertheless the system in (\ref{eq:switched.nonlinear.system.ex}) is unstable.

To see this, note that $$V(f_i(x))=x_1^2x_2^2<V(x)=x_1^2x_2^2+(x_1^2+x_2^2) $$ for $i=1,2,$ and for all $x\neq 0.$\\ On the other hand, (\ref{eq:switched.nonlinear.system.ex}) is unstable since in particular the dynamics $x_{k+1}=f(x_k)$ with $$f(x)=\left(\frac{x_1x_2}{2},\frac{x_1x_2}{2}\right)\in  conv\{f_1(x),f_2(x)\}$$ is obviously unstable.
\end{example}


Note that the Lyapunov function in (\ref{eq:Lyap.nonconvex}) was not convex. Proposition~\ref{prop:convex.lyap} below shows that a convexity requirement on the Lyapunov function gets around the problem that arose above. To prove this proposition, we first give a lemma which is potentially of independent interest for global stability analysis. Recall that Lyapunov's theorem for global asymptotic stability commonly requires that the Lyapunov function $V$ be radial unbounded (i.e., satisfy $||x||\rightarrow\infty\implies V(x)\rightarrow\infty$). Our lemma shows that convexity brings this property for free.\footnote{We remind the reader that radial unboundedness is not equivalent to radial unboundedness along restrictions to all lines, hence the need for the subtleties in this proof.}



\begin{lemma}\label{lem:convex.coercive}
Suppose a function $V:\mathbb{R}^n\rightarrow\mathbb{R}$ satisfies $V(0)=0$ and $V(x)>0$ for all $x\neq 0$. If $V$ is convex, then it is radially unbounded.
\end{lemma}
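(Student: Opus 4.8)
The plan is to reduce the global growth of $V$ to a single positive lower bound on the unit sphere and then let convexity do the scaling. First I would record the standard fact that a finite-valued convex function on all of $\mathbb{R}^n$ is continuous; in particular $V$ is continuous on the compact unit sphere $S = \{x \in \mathbb{R}^n : \|x\| = 1\}$, so it attains its minimum there. Setting $c \mathrel{\mathop:}= \min_{\|x\|=1} V(x)$, the hypothesis $V(x)>0$ for all $x\neq 0$ applies to every point of $S$ (as each is nonzero), which forces $c>0$. This positive uniform bound is the whole point of the argument.

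The heart of the proof is then a scaling inequality coming from convexity applied along the segment joining the origin to an arbitrary point. For $x$ with $r \mathrel{\mathop:}= \|x\| \geq 1$, I would write the unit vector $u = x/r$ as the convex combination $u = (1-\tfrac{1}{r})\cdot 0 + \tfrac{1}{r}\cdot x$, which is legitimate since $\tfrac1r \in (0,1]$. Convexity together with $V(0)=0$ gives $V(u) \leq (1-\tfrac1r)V(0) + \tfrac1r V(x) = \tfrac1r V(x)$, and hence $V(x) \geq r\,V(u) \geq c\,\|x\|$. Letting $\|x\|\to\infty$ yields $V(x)\to\infty$, which is exactly radial unboundedness.

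The only real subtlety --- and precisely the point the footnote flags --- is resisting the temptation to argue one direction at a time. Convexity does guarantee radial unboundedness along each individual ray through the origin (since for fixed unit $u$ the one-variable convex map $t\mapsto V(tu)$ with $V(0)=0$ has $V(tu)/t$ nondecreasing and positive), but the slope $V(t_0 u)/t_0$ may depend on $u$ and could a priori degrade to $0$ as the direction varies, so a direction-by-direction argument does not by itself deliver the uniform conclusion. Compactness of $S$ is exactly what upgrades the pointwise positivity $V|_S>0$ to the uniform bound $c>0$, and this uniformity is what makes the single estimate $V(x)\geq c\|x\|$ possible. I expect everything else to be routine: no differentiability or smoothness of $V$ is needed, only its continuity and the defining convexity inequality.
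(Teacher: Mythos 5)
Your proof is correct, and it takes a genuinely different (and more economical) route than the paper's. The paper argues by contradiction: assuming some sublevel set $S$ of $V$ is unbounded, it extracts a direction $\hat{x}$ as a limit of normalized points of $S$, uses convexity of $S$ to show the entire ray $\{c\hat{x} : c \geq 0\}$ lies in $S$, and then contradicts the boundedness of $V$ on that ray via the one-dimensional convexity inequality $g(w)/w \geq g(1)$. You instead prove the stronger, quantitative statement $V(x) \geq c\,\|x\|$ for $\|x\| \geq 1$, where $c = \min_{\|x\|=1} V(x) > 0$, by applying the convexity inequality along the segment from $0$ to $x$ and using compactness of the unit sphere to make the sphere bound uniform. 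Both arguments rest on the same two ingredients (continuity of a finite-valued convex function on $\mathbb{R}^n$, and the scaling inequality $V(tu) \geq t\,V(u)$ for $t \geq 1$ that follows from convexity and $V(0)=0$), but your direct version avoids the subsequence extraction and the closedness/ball arguments entirely, and it delivers an explicit linear growth rate as a bonus. Your closing remark correctly identifies why a ray-by-ray argument alone would not suffice and why compactness is the step that addresses the subtlety flagged in the paper's footnote.
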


\begin{proof}
	We proceed by contradiction. Suppose that $V$ is not radially unbounded. This implies that there exists a scalar $s>0$ for which the sublevel set $$S\mathrel{\mathop:}=\{x\in\mathbb{R}^n|\ V(x)\leq s  \}$$
	of $V$ is unbounded. As $V$ is convex, $S$ is convex, and as any nonempty sublevel set of $V$ contains the origin, $S$ contains the origin. We claim that $S$ must in fact contain an entire ray originating from the origin.
	
Indeed, as $S$ is unbounded, there exists a sequence of points $\{x_k\}$ such that $\lim_{k\rightarrow \infty}||x_k||=\infty$ and such that $V(x_k)\leq s$ for all $k \in \mathbb{N}.$  Consider now the sequence $\{x_k/||x_k||\}$: this is a bounded sequence and hence has a subsequence that converges. Let $\hat{x}$ be the limit of this subsequence. We argue that the ray $\{c\hat{x}|\ c\geq 0 \}$ is contained in $S$. Suppose that it was not: then $V(\alpha \hat{x})>s$ for some fixed $\alpha>0$, and since $S$ is closed (as a sublevel set of a continuous function), there exists a scalar $\epsilon>0$ such that for all $y\in\mathbb{R}^n$ with $||y-\alpha\hat{x}||\leq \epsilon$, we have $V(y)>s.$ As $\lim_{k\rightarrow \infty} ||x_k||=\infty$ and a subsequence of $\{x_k/||x_k||\}$ converges to $\hat{x}$, there must exist an integer $k_0$ such that $$||x_{k_0}||> \alpha \text{ and  } ||\hat{x}-\frac{x_{k_0}}{||x_{k_0}||}||\leq \epsilon/ \alpha.$$ Note that $$||\alpha \hat{x} -\alpha \frac{x_{k_0}}{||x_{k_0}||}|| \leq \epsilon,$$ which implies that $V(\alpha x_{n_0}/||x_{k_0}||)>s$ and hence $\alpha x_{k_0}/||x_{k_0}||$ does not belong to $S$. But this contradicts convexity of $S$ as $$\frac{\alpha x_{k_0}}{||x_{k_0}||}=\frac{\alpha}{||x_{k_0}||} \cdot x_{k_0}+(1-\frac{\alpha}{||x_{k_0}||})\cdot 0$$
	and $x_{k_0}$ and $0$ are in $S$.
	
We now consider the restriction of $V$ to this ray, which we denote by $g(z)=V(z\hat{x})$, where $z\geq 0.$ We remark that as a univariate function, $g$ is convex, and positive everywhere except at zero where it is equal to zero. By convexity of $g$, we have the inequality $$\frac{1}{w} g(w)+\left(1-\frac{1}{w}\right)g(0)\geq g \left( \frac{1}{w}\cdot w+(1-1/w) \cdot 0 \right)$$ for all $w \in \mathbb{N}$. This is equivalent to \begin{equation}\label{eq:g.univariate.inequality}
\frac{g(w)}{w} \geq g(1).
\end{equation}
Note that $g(1)>0$, but $g(w)\leq s~\forall w \in \mathbb{N}$ since $g$ is a restriction of $V$ to a ray contained in $S$. This contradicts the inequality in (\ref{eq:g.univariate.inequality}) when $w$ is large. Hence, $S$ cannot be unbounded and it follows that $V$ must be radially unbounded.
\end{proof}

\begin{proposition}\label{prop:convex.lyap}
Consider the nonlinear switched system in (\ref{eq:switched.nonlinear.system.ex}).
\begin{enumerate}[(i)]
\item  If there exists a convex function $V:\mathbb{R}^n\rightarrow\mathbb{R}$ that satisfies $V(0)=0$, $V(x)>0$ for all $x\neq 0,$ and 
\begin{equation}\label{eq:corner.decrease}
V(f_i(x))<V(x), \quad \forall x\neq 0, \forall i\in\{1,\ldots,m\},
\end{equation}
then the origin is globally asymptotically stable under arbitrary switching.

\item If there exist a scalar $\beta>0$ and a convex function $V:\mathbb{R}^n\rightarrow\mathbb{R}$ that satisfies $V(0)=0$, $V(x)>0$ for all $x\neq 0,$ and 
\begin{equation}\label{eq:corner.decrease.local}
V(f_i(x))<V(x), \quad \forall x\neq 0 \ \mbox{with}\  V(x)\leq \beta, \mbox{and}\  \forall i\in\{1,\ldots,m\},
\end{equation}
then the origin is locally asymptotically stable under arbitrary switching and the set $\{x\in \mathbb{R}^n|~V(x)\leq \beta \}$ is a subset of the region of attraction of the origin.
\end{enumerate}

\end{proposition}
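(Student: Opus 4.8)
The plan is to exploit convexity of $V$ to reduce the difference \emph{inclusion} to an ordinary difference equation governed by a genuine common Lyapunov function, and then to run a standard discrete-time Lyapunov argument, invoking Lemma~\ref{lem:convex.coercive} to supply the radial unboundedness needed for the global claim in part (i).

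The crux of the argument is that convexity propagates the corner decrease to the entire convex hull. Fix $x\neq 0$ and an arbitrary $y\in conv\{f_1(x),\ldots,f_m(x)\}$, say $y=\sum_i\lambda_i f_i(x)$ with $\lambda_i\geq 0$ and $\sum_i\lambda_i=1$. Jensen's inequality gives $V(y)\leq\sum_i\lambda_i V(f_i(x))\leq\max_i V(f_i(x))$. Hence, setting $g(x)\mathrel{\mathop:}=\max_i V(f_i(x))$, we obtain $V(\tilde f(x))\leq g(x)$ for \emph{every} admissible selection $\tilde f$, while hypothesis (\ref{eq:corner.decrease}) yields $g(x)<V(x)$ for all $x\neq 0$ (and $g(0)=0$ since $f_i(0)=0$). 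The function $g$ is continuous as a finite maximum of continuous functions, and the uniform bound $V(x_{k+1})\leq g(x_k)<V(x_k)$ holds along every trajectory of the inclusion. This single continuous function therefore controls all admissible trajectories at once, which is exactly what the bare corner condition could not do (cf. Example~\ref{ex:nonconvex.fails}).

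For part (i), I would invoke Lemma~\ref{lem:convex.coercive} to conclude that $V$ is radially unbounded; with continuity and positive definiteness this makes each sublevel set $\Omega_\alpha\mathrel{\mathop:}=\{x\mid V(x)\leq\alpha\}$ compact, and the bound above shows $\Omega_\alpha$ is invariant. Lyapunov stability then follows from a routine compactness argument: given $\epsilon>0$, a convergent-subsequence argument (using positive definiteness and compactness of $\Omega_1$) produces $\alpha>0$ with $\Omega_\alpha\subseteq\{\|x\|<\epsilon\}$, and continuity of $V$ at the origin gives $\delta>0$ with $\{\|x\|\leq\delta\}\subseteq\Omega_\alpha$; invariance of $\Omega_\alpha$ keeps trajectories started in $\{\|x\|<\delta\}$ inside $\{\|x\|<\epsilon\}$. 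For attraction, note that along any trajectory the sequence $V(x_k)$ is nonincreasing and bounded below, hence converges to some $L\geq 0$. If $L>0$, the trajectory would stay in the compact annulus $\{x\mid L\leq V(x)\leq V(x_0)\}$, on which the continuous positive function $V-g$ attains a minimum $c>0$, forcing $V(x_{k+1})\leq g(x_k)\leq V(x_k)-c$ and thus $V(x_k)\to-\infty$, a contradiction. Therefore $L=0$, and since $V$ is proper and positive definite this forces $x_k\to 0$; attraction is global because $\Omega_{V(x_0)}$ is compact for every initial condition $x_0$.

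Finally, part (ii) is the local restriction of the same argument. Radial unboundedness still makes $\Omega_\beta$ compact, hypothesis (\ref{eq:corner.decrease.local}) makes $\Omega_\beta$ invariant and gives $g(x)<V(x)$ on $\Omega_\beta\setminus\{0\}$, and the attraction argument above runs verbatim inside the compact invariant set $\Omega_\beta$, proving that $\Omega_\beta$ lies in the region of attraction; stability is obtained by choosing $\epsilon$ small enough that the sublevel set $\Omega_\alpha$ used above sits inside $\Omega_\beta$. I expect the only nonroutine point to be the passage from \emph{strict} decrease to genuine \emph{attraction}: strict decrease of $V$ by itself does not force convergence, and one needs the uniform decrease $V-g\geq c$ on compact annuli. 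Both ingredients enabling this—propagation of the decrease to all selections (via convexity/Jensen) and compactness of the sublevel sets (via Lemma~\ref{lem:convex.coercive})—rely essentially on the convexity of $V$.
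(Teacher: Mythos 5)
Your proposal is correct and follows essentially the same route as the paper's proof: Jensen's inequality to propagate the corner decrease to every selection from the convex hull, Lemma~\ref{lem:convex.coercive} to get compact sublevel sets, and a uniform-decrease-on-a-compact-annulus argument to upgrade strict decrease to attraction. Your packaging of the compactness step via $\min_{\Omega}(V-g)$ with $g(x)=\max_i V(f_i(x))$ is just a cosmetic variant of the paper's supremum of $V\bigl(\sum_i\lambda_i f_i(x)\bigr)-V(x)$ over $\Omega\times\Delta_m$.
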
  
\begin{proof}
The proof of this proposition is similar to the standard proofs of Lyapunov's theorem except for the parts where convexity intervenes. Hence we only prove part (i) and leave the very analogous proof of part (ii) to the reader.

Suppose the assumptions of (i) hold. Then, for all $x_k \neq 0$ we have
\begin{equation}\label{eq:Lyap.decrease}
\begin{aligned} V(x_{k+1})-V(x_k)&=V\left(\sum_{i=1}^{m}{\lambda_i(k) f_i(x_k)}\right)-V(x_k) \\ &\leq \sum_{i=1}^{m}{\lambda_i(k) V(f_i(x_k))}-V(x_k)\\ &=  \sum_{i=1}^{m}{\lambda_i(k) \left(V(f_i(x_k))-V(x_k)\right)}\\ &<0,
\end{aligned}
\end{equation}
where the first inequality follows from convexity of $V,$ and the second from (\ref{eq:corner.decrease}) and the fact that $\sum_{i=1}^{m}\lambda_i(k)=1$. Hence, our Lyapunov function decreases in each iteration independent of the realization of the uncertain and time-varying map $\tilde{f}$ in (\ref{eq:switched.nonlinear.system.ex}).

To show that the origin is stable in the sense of Lyapunov, consider an arbitrary scalar $\delta>0$ and the ball $B(0,\delta)\mathrel{\mathop:}=\{x\in\mathbb{R}^n|\  ||x||\leq \delta  \}.$ Recall that as a consequence of Lemma~\ref{lem:convex.coercive}, all sublevel sets of $V$ are bounded. Let $\hat{\delta}>0$ be the radius of a ball that is contained in a (full-dimensional) sublevel set of $V$ which itself is contained in $B(0,\delta)$. Then, from (\ref{eq:Lyap.decrease}), we get that $x_0\in B(0,\hat{\delta})\implies x_k\in B(0,\delta), \forall k.$

To show that the origin attracts all initial conditions, consider an arbitrary nonzero point $x_0\in\mathbb{R}^n$ and denote by $\{x_k\}$ any sequence that this initial condition can generate under the iterations of (\ref{eq:switched.nonlinear.system.ex}). We know \rmj{that} the sequence $\{V(x_k)\}$ is positive and decreasing (unless $x_k$ in finite time lands on the origin, in which case the proof is finished). It follows that $\{V(x_k)\}\rightarrow c$ for some scalar $c\geq 0.$ We claim that $c=0$, in which case we must have $x_k\rightarrow 0$ as $k\rightarrow\infty$ which is the desired statement. Suppose for the sake of contradiction that we had $c>0.$ Then, we must have $$x_k\in\Omega\mathrel{\mathop:}=\{x\in\mathbb{R}^n| \ c\leq V(x)\leq V(x_0)   \}, \forall k.$$

Note that the set $\Omega$ is closed and bounded as $V$, being a convex function, is continuous and by Lemma~\ref{lem:convex.coercive} also radially unbounded. Let $\Delta_m$ denote the unit simplex in $\mathbb{R}^m$ and let $$\eta=  \sup_{x\in\Omega,\lambda\in\Delta_m} V\left(\sum_{i=1}^{m}{\lambda_i f_i(x)}\right)-V(x).$$

We claim that $\eta<0$. This is because of (\ref{eq:Lyap.decrease}) and the fact that the above supremum is achieved as the objective functions is continuous and the feasible set is compact. Hence, the sequence $\{V(x_k)\}$ decreases in each step by at least $|\eta|$ and hence must go to $-\infty$. This however contradicts positivity of $V$ on $\Omega$.
\end{proof}


%

\subsection{Computing regions of attraction for switched nonlinear systems} In this section, we consider the switched nonlinear system in (\ref{eq:nl.systems.again}), where $f_1,\ldots,f_m$ are polynomials. It is quite common in this case for the system to not be globally stable but yet to have a locally attractive equilibrium point.
Under the assumption that $\rho(A_1,\ldots,A_m)<1$, where $A_1,\ldots,A_m$ are the matrices associated with the linearizations of $f_1,\ldots,f_m$ around the origin, we design an algorithm based on semidefinite programming that provably finds a full-dimensional inner approximation to the region of attraction of the nonlinear switched system. Note that if the origin of (\ref{eq:nl.systems.again}) is locally asymptotically stable, then we must have $\rho(A_1,\ldots,A_m) \leq 1.$ The only case to remain is the boundary case $\rho(A_1,\ldots,A_m)=1,$ which is left for our future work.


Our procedure for finding the region of attraction will have two steps:
\begin{enumerate}[(i)]
\item Use SDP to find a common sos-convex Lyapunov function $V$ for the linearizations of $f_1,\ldots, f_m$ around the origin; i.e., find a positive definite sos-convex form $V(x)$ such that $V(x)-V(A_i x)$ is sos and positive definite for $i=1,\ldots,m.$ Existence of such a function is guaranteed by Theorem \ref{thm:existence-sos-convex-poly}, which was the main result of Section \ref{section:linear}.
\item Find a scalar $\beta>0$ such that $$\forall x \neq 0, V(x)\leq \beta \Rightarrow V(f_i(x))<V(x), \text{ for } i=1,\ldots,m.$$ We will prove that semidefinite programming can find such a $\beta$ in finite time and certify the above implication algebraically. 


\end{enumerate}

Once this procedure is carried out, the set $\{x\in\mathbb{R}^n|~V(x)\leq \beta\}$ is guaranteed to be a subset of the region of attraction. Implementation of step (ii) requires the reader to be reminded of the following fundamental theorem in algebraic geometry. Recall that a basic semialgebraic set is a set defined by a finite number of polynomial inequalities and equations.

\begin{theorem}[Stengle's Positivstellensatz~\cite{stengle1974nullstellensatz}]\label{thm:stengle}
	The basic semialgebraic set $$S=\{x\in \mathbb{R}^n |~ g_1(x)\geq 0,\ldots, g_m(x)\geq 0, h_1(x)=0,\ldots, h_s(x)=0\}$$ is empty if and only if there exist polynomials $t_1(x),\ldots,t_s(x)$ and sum of squares polynomials $\{\sigma_{a_1\ldots a_m}|~ (a_1,\ldots,a_m)\in \{0,1\}^{m}\}$ such that
	\begin{align*}
	-1=\sum_{j=1}^s t_j(x) h_j(x)+\sum_{a_1,\ldots,a_m \in \{0,1\}^m} \sigma_{a_1 \ldots a_m}(x) \prod_{i=1}^m g_i^{a_i}(x).
	\end{align*}
\end{theorem}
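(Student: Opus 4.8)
This is the classical Positivstellensatz of Stengle, so the plan is to reconstruct its standard proof, which splits into a routine direction and a substantial one. For the routine (``if'') direction I would argue by direct evaluation: assuming the displayed identity holds, suppose for contradiction that some $x^{*}\in S$ exists. Then $h_j(x^{*})=0$ for every $j$ kills the ideal term $\sum_j t_j(x^{*})h_j(x^{*})$; each $\sigma_{a_1\ldots a_m}(x^{*})\geq 0$ because it is a sum of squares; and each product $\prod_i g_i^{a_i}(x^{*})\geq 0$ because the $g_i(x^{*})$ are nonnegative. Hence the right-hand side is nonnegative at $x^{*}$, contradicting its equality to $-1$. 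Therefore $S=\emptyset$.

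The content is in the converse. I would introduce the preordering $P\subseteq\mathbb{R}[x]$ generated by $g_1,\ldots,g_m$---that is, all finite sums of terms $\sigma_{a_1\ldots a_m}\prod_i g_i^{a_i}$ with each $\sigma$ a sum of squares---and the ideal $I$ generated by $h_1,\ldots,h_s$; the asserted identity is precisely the statement $-1\in P+I$. I would prove the contrapositive: assuming $-1\notin P+I$, I show $S\neq\emptyset$. Since $P+I$ is then a proper preordering (closed under addition and multiplication, containing all squares, avoiding $-1$), Zorn's lemma extends it to a preordering $Q$ that is maximal among those not containing $-1$. The key algebraic lemma is that such a maximal $Q$ is an ordering: $Q\cup(-Q)=\mathbb{R}[x]$ and its support $Q\cap(-Q)$ is a prime ideal $\mathfrak{p}$. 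This ordering then descends to a field ordering of $K=\mathrm{Frac}(\mathbb{R}[x]/\mathfrak{p})$, and I pass to the real closure $R$ of $K$, a real closed field containing $\mathbb{R}$.

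In $R^{n}$, the images of the coordinate variables form a point at which $g_i\geq 0$ (as $g_i\in Q$) and $h_j=0$ (as $h_j\in\mathfrak{p}$), so the system defining $S$ is solvable over $R$. The last step, which I expect to be the main obstacle, is to descend this abstract solution to a genuine real point: by Tarski's transfer principle---equivalently, the Artin--Lang homomorphism theorem---a semialgebraic system with coefficients in $\mathbb{R}$ that is solvable over some real closed extension of $\mathbb{R}$ is already solvable in $\mathbb{R}^{n}$. This produces a point of $S$, contradicting emptiness and hence forcing $-1\in P+I$. The genuine difficulty is not the algebra of preorderings but exactly this model-theoretic transfer, which is where real-closedness of $\mathbb{R}$ (beyond the field axioms) is essential; the remaining ingredients are bookkeeping with Zorn's lemma and the correspondence between maximal proper preorderings and field orderings.
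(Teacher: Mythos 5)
The paper does not prove this statement: Theorem~\ref{thm:stengle} is imported verbatim from the literature (Stengle's original article is cited), and is used only as a black box in the proof of Theorem~\ref{thm:nl.beta.guarantee}. So there is no in-paper argument to compare yours against. Judged on its own terms, your sketch is the standard modern proof of the Positivstellensatz and its overall architecture is sound. The ``if'' direction by evaluation at a hypothetical point of $S$ is complete as written. For the converse, the chain preordering $\to$ Zorn $\to$ maximal proper preordering is an ordering with prime support $\to$ ordered residue field $\to$ real closure $\to$ Tarski transfer is exactly the textbook route (Bochnak--Coste--Roy, Marshall, Prestel--Delzell). Two points you should make explicit if you write this out in full: (1) $P+I$ is itself a preordering --- closure under multiplication holds because $(p_1+i_1)(p_2+i_2)=p_1p_2+(p_1i_2+i_1p_2+i_1i_2)\in P+I$ --- so Zorn's lemma applies to it directly; and (2) the key lemma that a preordering $Q$ maximal among those avoiding $-1$ satisfies $Q\cup(-Q)=\mathbb{R}[x]$ requires the identity $(1+q_1)(1+q_3)=-f^2q_2q_4$ obtained from $-1\in Q+fQ$ and $-1\in Q-fQ$, which is the one genuinely nontrivial algebraic computation in the preordering half of the argument. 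You correctly identify the Artin--Lang/Tarski transfer as the step where real-closedness of $\mathbb{R}$ enters; note also that $h_j\in\mathfrak{p}$ follows because both $\pm h_j$ lie in $I\subseteq Q$, so the $h_j$ vanish at the abstract point, as you need.
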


\begin{theorem}\label{thm:nl.beta.guarantee}
Consider the switched nonlinear system in (\ref{eq:nl.systems.again}), where $f_1,\ldots,f_m: \mathbb{R}^n \rightarrow \mathbb{R}^n$ are polynomials. Let $$f_1^l(x)=A_1x,\ldots,f_m^l(x)=A_mx$$ be the linearizations of $f_1,\ldots,f_m$ around zero, and suppose $\rho(A_1,\ldots,A_m)<1$. Let $y \in \mathbb{R}$ be a new variable. Then, there exist an sos-convex positive definite form $V(x)$, a scalar $\beta>0$, a polynomial $t$, and sum of squares polynomials $$\{\sigma_{a_0\ldots a_m}|~ (a_0,\ldots,a_m)\in \{0,1\}^{m+1}\}$$ such that 
\rmj{	\begin{equation}\label{eq:stengle.lyap}
	\begin{aligned}
	-1=&t(x,y)(\sum_{i=1}^n x_i^2\cdot y-1)\\
	&+\sum_{a_0,\ldots,a_m \in \{0,1\}^{m+1}} \sigma_{a_0\ldots a_m}(x,y) (\beta-V(x))^{a_0} \prod_{i=1}^m (V(f_i(x))-V(x))^{a_i}.
	\end{aligned}
	\end{equation}}
	Conversely, if (\ref{eq:stengle.lyap}) holds, then the switched nonlinear system in (\ref{eq:nl.systems.again}) is locally asymptotically stable under arbitrary switching and the set $$\{x \in \mathbb{R}^n|~ V(x)\leq \beta\}$$ is a subset of the region of attraction.
\end{theorem}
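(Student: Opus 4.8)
The plan is to prove the two implications separately. For the existence (forward) direction, I would first apply Theorem~\ref{thm:existence-sos-convex-poly} to the linearizations $A_1,\ldots,A_m$: since $\rho(A_1,\ldots,A_m)<1$ by hypothesis, the theorem hands me a homogeneous sos-convex positive definite form $V$, of some even degree $2D$, such that each form $V(x)-V(A_ix)$ is positive definite and sos. The substance of the forward direction is then to show that this \emph{same} $V$ strictly decreases along the true nonlinear maps $f_i$ on a sufficiently small sublevel set, after which the certificate will follow from Stengle's theorem.

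To establish the decrease, I would write $f_i(x)=A_ix+h_i(x)$ with $h_i(x)=O(\|x\|^2)$ (using $f_i(0)=0$ and $A_i=\nabla f_i(0)$). Homogeneity of the positive definite form $V(x)-V(A_ix)$ gives a bound $V(x)-V(A_ix)\ge c\|x\|^{2D}$ for some $c>0$ (the minimum over the unit sphere), while the Taylor estimate $V(f_i(x))-V(A_ix)=\int_0^1 \nabla V(A_ix+th_i(x))^T h_i(x)\,dt$, combined with $\|\nabla V(z)\|=O(\|z\|^{2D-1})$ and $\|h_i(x)\|=O(\|x\|^2)$, yields $|V(f_i(x))-V(A_ix)|\le K\|x\|^{2D+1}$ near the origin. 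Hence $V(x)-V(f_i(x))\ge \|x\|^{2D}\,(c-K\|x\|)>0$ for all $0<\|x\|\le r$ with $r$ small. Since $V$ is positive definite, continuous, and radially unbounded (Lemma~\ref{lem:convex.coercive}), its sublevel sets are compact and shrink to the origin as $\beta\to0$, so I may choose $\beta>0$ with $\{V\le\beta\}\subseteq\{\|x\|\le r\}$; then $V(f_i(x))<V(x)$ for every $x\neq0$ with $V(x)\le\beta$ and every $i$. This says precisely that the basic semialgebraic set cut out by $\beta-V(x)\ge0$, $V(f_i(x))-V(x)\ge0$, and $\sum_j x_j^2\,y-1=0$ is empty, the last equation being the standard device that forces $x\neq0$ through the auxiliary variable $y$. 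Applying the nontrivial direction of Stengle's Positivstellensatz (Theorem~\ref{thm:stengle}) to this empty set produces the multiplier $t$ and the sos polynomials realizing the identity (\ref{eq:stengle.lyap}).

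For the converse, I would run this reasoning backwards. The easy direction of Stengle's Positivstellensatz shows that the identity (\ref{eq:stengle.lyap}) certifies emptiness of the corresponding semialgebraic set, and since $\sum_j x_j^2\,y=1$ is solvable exactly when $x\neq0$, this emptiness translates into $V(f_i(x))<V(x)$ for all $x\neq0$ with $V(x)\le\beta$ and all $i$. Finally, because $V$ is sos-convex it is convex, and as a positive definite form it satisfies $V(0)=0$ and $V(x)>0$ for $x\neq0$; thus Proposition~\ref{prop:convex.lyap}(ii) applies directly and yields local asymptotic stability under arbitrary switching together with the conclusion that $\{x:V(x)\le\beta\}$ is contained in the region of attraction.

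The hard part will be the existence step, specifically the quantitative comparison of the linear decrease $c\|x\|^{2D}$ against the neglected higher-order terms $O(\|x\|^{2D+1})$: one must exploit positive definiteness (not mere nonnegativity) of $V(x)-V(A_ix)$ to obtain a genuine power-of-$\|x\|$ lower bound that strictly dominates the perturbation throughout a full neighborhood, and then ensure $\beta$ places the sublevel set inside that neighborhood. Everything else amounts to assembling already-established pieces (Theorems~\ref{thm:existence-sos-convex-poly} and~\ref{thm:stengle}, Lemma~\ref{lem:convex.coercive}, and Proposition~\ref{prop:convex.lyap}). A secondary point to handle with care is the per-$i$ bookkeeping in the certificate: the emptiness, and hence the decrease, must be secured for each map $f_i$, so that Proposition~\ref{prop:convex.lyap}(ii)—which requires \emph{all} the $f_i$ to decrease $V$—can legitimately be invoked.
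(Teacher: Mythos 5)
Your proposal is correct and follows essentially the same route as the paper: Theorem~\ref{thm:existence-sos-convex-poly} supplies the sos-convex form $V$ for the linearizations, a Taylor/homogeneity comparison shows the degree-$2D$ decrease $c\|x\|^{2D}$ dominates the $O(\|x\|^{2D+1})$ perturbation on a small sublevel set, Stengle's Positivstellensatz converts the resulting emptiness into the identity (\ref{eq:stengle.lyap}), and Proposition~\ref{prop:convex.lyap}(ii) handles the converse. Your integral-form justification of the $O(\|x\|^{2D+1})$ remainder is in fact slightly more explicit than the paper's, which simply asserts the bound.
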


\begin{proof}
	We start with the converse as it is the easier direction to prove. Note that if (\ref{eq:stengle.lyap}) holds for some sos-convex positive definite form $V(x)$ and some scalar $\beta>0$, then the set 
	\begin{align}\label{eq:empty.set}
	\{(x,y)\in\mathbb{R}^{n+1}|~ V(x) \leq \beta, (\sum_{i=1}^n x_i^2)\cdot y=1, V(f_i(x))-V(x)\geq 0, i=1,\ldots,m \}
	\end{align}
	is empty. Indeed, if there was a point $(\hat{x},\hat{y})$ in this set, then plugging it into (\ref{eq:stengle.lyap}) would give a contradiction as the right hand side would evaluate to a nonnegative real number.
 We observe that emptiness of (\ref{eq:empty.set}) is equivalent to emptiness of 
	\begin{align}\label{eq:empty.set.2}
	\{x \in \mathbb{R}^n|~ V(x) \leq \beta, x \neq 0, V(x)-V(f_i(x))\leq 0, i=1,\ldots,m\},
	\end{align}
	which in turn implies that $$\forall x \neq 0, V(x)\leq \beta \Rightarrow V(f_i(x))<V(x), \text{ for } i=1,\ldots,m.$$ From Proposition \ref{prop:convex.lyap} (part (ii)), it follows that the switched nonlinear system in (\ref{eq:nl.systems.again}) is locally asymptotically stable under arbitrary switching and that the set $$\{ x \in \mathbb{R}^n|~ V(x)\leq \beta\}$$ is a subset of the region of attraction.

	We now show the opposite direction. Since $\rho(A_1,\ldots,A_m)<1$, we know from Theorem \ref{thm:existence-sos-convex-poly} that there exists a positive definite sos-convex form $V$ of some even degree $r$ such that 
	\begin{align}\label{eq-proof-roa}
	V(A_ix)<V(x),~\forall x\neq 0, \text{ and } i=1,\ldots,m.
	\end{align}
	We prove that there exists a scalar $\beta>0$ such that
	\begin{align}\label{eq:roa.proof}
	\forall x \neq 0, V(x)\leq \beta \Rightarrow V(f_i(x))<V(x), \text{ for } i=1,\ldots,m,
	\end{align}
	by considering the Taylor expansion of $V$ around the origin. As the maps $f_i,~ i=1,\ldots,m, $ are twice differentiable, we have
	 \rmj{\begin{eqnarray} V(f_i(x))- V(x)&=& V(A_ix + O(||x||^2))- V(x) \nonumber \\&=& V(A_ix)+   g(x) - V(x),\label{eq:V.f}\end{eqnarray}
	where $g(x)$ is $O(||x||^{r+1})$, that is, there exist $\delta_i>0$ and $K_i>0$ such that if $||x||<\delta_i$, 
	 \begin{align}\label{eq:defn.big.O}
	 |g(x)| \leq K_i ||x||^{r+1}.
	 \end{align} 
	
	Also, n}ote that $V(A_ix)-V(x)$ is a degree-$r$ form which is negative definite. Hence, if we define $$\lambda_i \mathrel{\mathop{:}}=-\frac{1}{2}\min_{||x||=1} (V(A_ix)-V(x)),$$ we have $\lambda_i>0$ and 
	 \begin{align} \label{eq:neg.def}
	 V(A_ix)-V(x)<-\lambda_i ||x||^r.
	 \end{align} 
	 Let $\epsilon_i=\min(\delta_i, \frac{\lambda_i}{K_i})$ and note that $\epsilon_i>0.$ For any nonzero $x$ such that $||x||\leq \epsilon_i$, we have
	 \begin{align*}
	 V(f_i(x))-V(x)&< -\lambda_i||x||^r+O(||x||^{r+1})\\
	 &\leq -\lambda_i ||x||^r+K_i||x||^{r+1}\\
	 &\leq 0,
	 \end{align*}
	 where the first inequality follows from (\ref{eq:V.f}) and (\ref{eq:neg.def}), the second from (\ref{eq:defn.big.O}) as $||x||\leq \delta$, and the third from the fact that $||x||\leq \frac{\lambda_i}{K_i}.$ By compactness of the sublevel sets of $V$ and homogeneity of $V$, there exists $\beta_i>0$ such that $V(x)\leq \beta_i \Rightarrow ||x||<\epsilon_i$. Taking $\beta=\min_{i=1,\ldots,m} \beta_i$ concludes the proof of (\ref{eq:roa.proof}).

Now observe that the statement in (\ref{eq:roa.proof}) implies that the set in (\ref{eq:empty.set.2}) is empty. This is equivalent to the set in (\ref{eq:empty.set}) being empty as noted previously. From Theorem \ref{thm:stengle}, this implies that there exist  a polynomial $t$, and sum of squares polynomials $\{\sigma_{a_0\ldots a_m}|~ (a_0,\ldots,a_m)\in \{0,1\}^{m+1}\}$ such that the algebraic identity in (\ref{eq:stengle.lyap}) holds.	
\end{proof}

Theorem \ref{thm:nl.beta.guarantee} gives rise to a hierarchy of semidefinite programs whose $r^{th}$ level involves searching for a polynomial $t$ and sum of squares polynomials $$\{\sigma_{a_0\ldots a_m}|~ (a_0,\ldots,a_m) \in \{0,1\}^{m+1} \}$$ of degree less than or equal to $2r$ that satisfy the algebraic identity in (\ref{eq:stengle.lyap}) (note that $V$ is fixed here). For fixed $r$, one can obtain the largest $\beta$ for which (\ref{eq:stengle.lyap}) is feasible by doing bisection on $\beta$. If the number $m$ of maps and the level $r$ of the hierarchy are fixed, one can check that the size of the resulting SDP is polynomial in the number of variables $n.$

We also remark that this SDP-based procedure terminates in finite time with a full-dimensional estimate of the ROA. Indeed, one can bound the degrees of the polynomials $t_j, j=1,\ldots,r$ and $\sigma_{a_1\ldots a_m}, a_1,\ldots,a_m \in \{0,1\}^m$ in Theorem \ref{thm:stengle} by quantities that only depend on the degree of the polynomials $h_i$ and $g_i$, $m$, $n$, and $s$ (see \cite{lombardi2014elementary} for the precise bound). So in theory, if we fix the degree of the polynomials $t$ and $\sigma_{a_1\ldots a_m}, a_1,\ldots,a_m \in \{0,1\}^m$ in (\ref{eq:stengle.lyap}) to that bound, start with any $\beta>0$, and halve $\beta$ when the SDP is infeasible, then the procedure will terminate in finite time with a positive $\beta$ for which the SDP is feasible. The bounds in \cite{lombardi2014elementary} are too large however to be practical and hence our remark here is of theoretical interest only. In practice, we observe that the first few levels of the hierarchy are sufficient to obtain a full-dimensional estimate of the ROA.

\subsection{Examples: ROA computation for nonlinear switched systems}\label{subsec:examples}
We give two examples of the ideas we have seen so far for local stability analysis.
\begin{example}\label{ex:convex.lyap}
Let us revisit the system (\ref{eq:ex.nonconvex.fails}) of Example \ref{ex:nonconvex.fails}.  We claim that the function $$W(x)=x_1^2+x_2^2, $$ which is convex, is a common Lyapunov function for $f_1,f_2$ on the set $$S=\{x \in \mathbb{R}^n|~||(x_1,x_2)^T||_\infty\leq 1\}.$$ Indeed, for $i=1,2,$ and nonzero $x\in\mathcal{S},$
\begin{eqnarray}\nonumber W(f_i(x))&=&x_1^2x_2^2 \\ \nonumber &< & x_1^2+x_2^2\\\nonumber &=& W(x).\end{eqnarray} Moreover, $S$ is an invariant set for $f_1$ and $f_2$. Hence, for the system (\ref{eq:ex.nonconvex.fails}), the set $S$ is part of the region of attraction of the origin under arbitrary switching.
\end{example}

We now give an example where quadratic Lyapunov functions do not suffice for a proof of local stability and our SDP procedure is carried out in full.

\begin{example}\label{ex:nonlin.roa}
\begin{figure} [h]
	\centering
\centering \scalebox{0.33} {\includegraphics{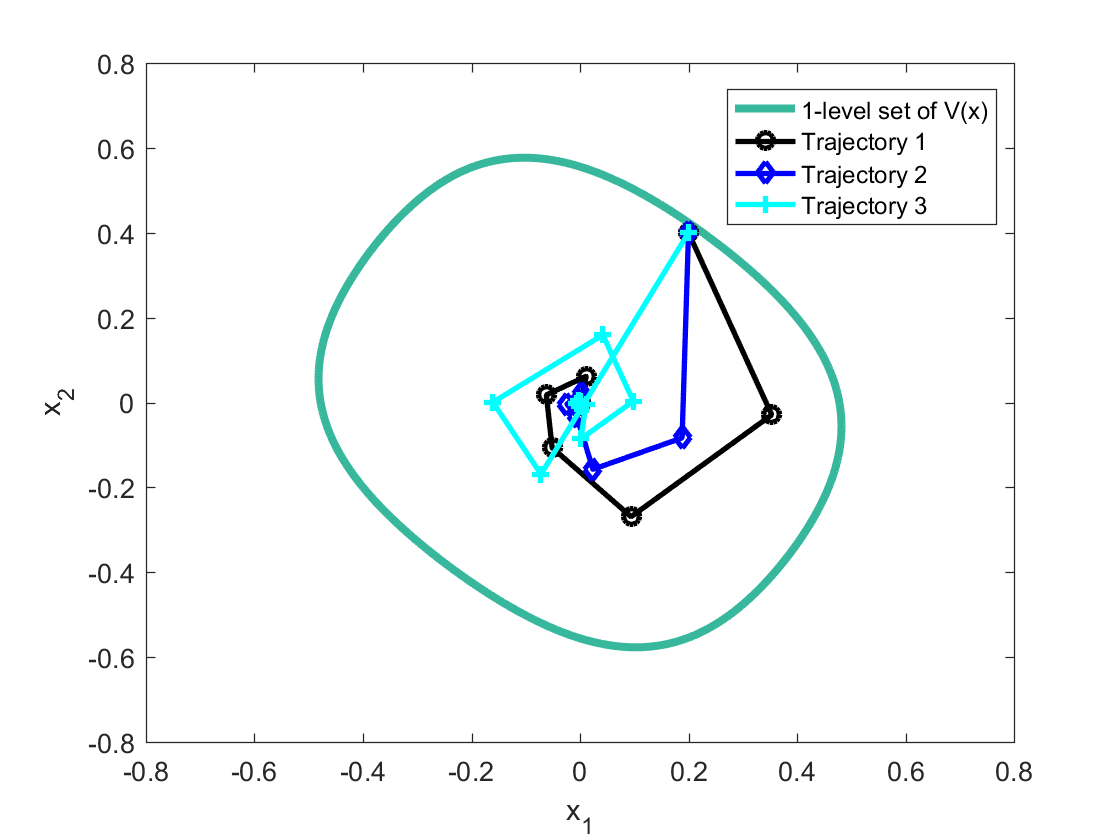}} 
\caption{The 1-level set of $V$ in (\ref{eq:V.ex}) as well as three possible trajectories of the nonlinear switched system in Example \ref{ex:nonlin.roa} starting from the same initial condition.}\label{fig:traj}
\end{figure}
\end{example}
Consider the dynamical system in (\ref{eq:nl.systems.again}), with $m=n=2$ and
\begin{align*}
f_1(x_1,x_2)&=\begin{pmatrix}-\frac14 x_1-\frac14 x_2+\frac15 x_1^2\\ -x_1+\frac{1}{10}x_1x_2 \end{pmatrix},\\
f_2(x_1,x_2)&=\begin{pmatrix} \frac34 x_1+\frac34 x_2-\frac{1}{10}x_1x_2\\ -\frac12 x_1+\frac14 x_2 \end{pmatrix}.
\end{align*}
The linearizations of $f_1$ and $f_2$ at $(x_1,x_2)=0$ are given by $f_{1}^l(x)=A_1x$ and $f_2^l (x)=A_2x$, where 
$$A_1=\begin{pmatrix} -1/4&-1/4\\ -1 & 0\end{pmatrix} \text{ and } 
A_2=\begin{pmatrix}3/4 & 3/4 \\ -1/2 & 1/4 \end{pmatrix}.$$
One can check that these matrices do not admit a common quadratic Lyapunov function. We will consequently be searching for polynomials of higher order. In this case, imposing convexity becomes essential as it is no longer implied by nonnegativity of the polynomial. Using the parser YALMIP\cite{yalmip} and the SDP solver MOSEK\cite{mosek2015}, we look for a quartic form $V$ satisfying the sos conditions of Theorem \ref{thm:existence-sos-convex-poly}. Our SDP solver returns the sos-convex form
\begin{align}\label{eq:V.ex}
V(x_1,x_2)=19.14x_1^4+10.57x_1^3x_2+47.88x_1^2x_2^2+16.47x_1x_2^3+10.49x_2^4.
\end{align}
This implies that $\rho(A_1,A_2)<1.$ By solving a second SDP, one can find a polynomial $t$ of degree $\leq 4$ and sos polynomials $\sigma_0$, $\sigma_1$, $\sigma_2$, $\sigma_3$, $\sigma_{12}, \sigma_{23},\sigma_{13}$ and $\sigma_{123}$ of degree $\leq 4$ that satisfy (\ref{eq:stengle.lyap}) with $\beta=1.$ From the ``easy'' direction of Theorem \ref{thm:nl.beta.guarantee}, this implies that the set $$\{x \in \mathbb{R}^n|~V(x)\leq 1\}$$ is part of the region of attraction of the nonlinear switched system given by $f_1$ and $f_2$. This is illustrated in Figure \ref{fig:traj}, where we have plotted the 1-level set of $V$, and three possible trajectories of our switched dynamical system. These trajectories are generated by the dynamics $x_{k+1}=\lambda f_1(x_k)+(1-\lambda)f_2(x_k),$ where $x_0=(0.2,0.4)$ for all three trajectories and $\lambda$ is picked uniformly at random in $[0,1]$ at each iteration. As can be seen, all trajectories flow to the origin as predicted by the theory. 

\section{Conclusions and extensions to multiple Lyapunov functions}\label{sec:future}

In this paper, we introduced the concept of sos-convex Lyapunov functions for stability analysis of switched linear and nonlinear systems. For switched linear systems, we proved a converse Lyapunov theorem on guaranteed existence of sos-convex Lyapunov functions. We further showed that the degree of a convex polynomial Lyapunov function can be arbitrarily higher than the degree of a non-convex one. For switched nonlinear systems, we showed that sos-convex Lyapunov functions allow for computation of regions of attraction under arbitrary switching, while non-convex Lyapunov functions in general do not.

Our work can be extended in at least two different directions. The first direction concerns the computation of the region of attraction of the nonlinear switched system in (\ref{eq:nl.systems.again}) when the joint spectral radius of the matrices associated to the linearizations of $f_1\ldots,f_m$ is exactly equal to one. In this scenario, the assumption of Theorem \ref{thm:nl.beta.guarantee} is violated. Nevertheless, one can directly search for an sos-convex polynomial $V$, a scalar $\beta>0$, a polynomial $t$, and sos polynomials $\sigma_{a_0\ldots a_m}$ satisfying (\ref{eq:stengle.lyap}) to have a certificate that the $\beta$-sublevel set of $V$ is in the ROA of the origin. The problem with this approach however is that the coefficients of $V$ and $\sigma_{a_0\ldots a_m}$ are all decision variables and their multiplication leads to a nonconvex constraint. A principled way of getting around this issue with convex relaxations is left for our future work. 

The second direction is motivated by scalability issues encountered when solving semidefinite programs arising from sos constraints on high-degree polynomials. In general, it is more efficient to work with \emph{multiple} low-degree sos-convex Lyapunov functions as opposed to a single one of high degree. This is because the underlying semidefinite program will end up having semidefinite constraints on much smaller matrices (though possibly a higher number of them). Nevertheless this trade-off is almost always computationally beneficial for interior point solvers. 

A systematic approach for searching for multiple Lyapunov functions that together imply stability of a switched linear system has been proposed in \cite{ajpr-sicon}. If the switched system is defined by $x_{k+1}=A_i x_k, ~i=1,\ldots,m,$ and our candidate Lyapunov functions are $V_1,\ldots,V_r,$ the works in \cite{ajpr-sicon} and \cite{jungers2017characterization} completely characterize all collections of Lyapunov inequalities of the type $$\{V_j(A_ix) < V_k(x)\}$$ that prove stability. This characterization is based on the concept of \emph{path-complete graphs} (see~\cite[Definition 2.2]{ajpr-sicon}), which is a notion that relates to the theory of finite automata and languages. In our future work, we would like to extend this theory to cover nonlinear switched systems. In this setting, the property of convexity needs to be carefully incorporated, as the current paper has demonstrated. More precisely, we would like to understand which path-complete paths give rise to a common \emph{convex} Lyapunov function, assuming that the nodes of the graph are all associated with convex Lyapunov functions. The proposition below provides a large family of such graphs, though we suspect that there must be others. In the reader's interest, we present the proposition in a self-contained fashion with no mention to the terminology of path-complete graphs. The common convex Lyapunov function obtained here will be a pointwise maximum of convex functions. A complete study of the more general question above would likely need to extend the ideas in \cite[Section III]{philippe2017path} and \cite[Section IV]{jungers2017characterization}. 

For simplicity, we state the proposition below for global asymptotic stability. The analogous statement for local asymptotic stability is simple to derive (similarly to what was done in Proposition \ref{prop:convex.lyap}).

\begin{proposition}\label{cor:max.of.quadratics}
Consider the nonlinear switched system in (\ref{eq:switched.nonlinear.system.ex}) defined by continuous maps $f_1,\ldots,f_m:\mathbb{R}^n \rightarrow \mathbb{R}^n$. If there exist $K$ convex Lyapunov functions $V_1,\ldots,V_K:\mathbb{R}^n\rightarrow\mathbb{R}$ that satisfy $V_i(0)=0$, $V_i(x)>0$ for all $x\neq 0$, $\forall i\in \{1,\ldots,K\},$ and
\begin{eqnarray}\label{eq:max.quadratics.LMIs}
\forall (i,j) \in\{1,\ldots,m\}\times \{1,\ldots,K\},\ \exists k\in\{1,\ldots,K\}\
\nonumber
\\
\mbox{such that}\quad V_j(f_i(x))< V_k(x), \quad \forall x \neq 0,
\end{eqnarray}
then the origin is globally asymptotically stable under arbitrary switching. Moreover, if these constraints are satisfied, then the convex function 
$$W(x)\mathrel{\mathop{:}}=\max \{V_1(x),\ldots ,V_K(x)\}$$ is a common Lyapunov function for $f_1,\ldots, f_m$.
%
%
%
\end{proposition}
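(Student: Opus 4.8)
The plan is to show that the pointwise maximum
$$W(x)\mathrel{\mathop{:}}=\max\{V_1(x),\ldots,V_K(x)\}$$
is itself a common convex Lyapunov function for $f_1,\ldots,f_m$, and then to invoke Proposition~\ref{prop:convex.lyap}~(i) to conclude global asymptotic stability. This reduces the statement to the one already proved and explains the ``moreover'' clause simultaneously.

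First I would verify the three static requirements on $W$. As a pointwise maximum of finitely many convex functions, $W$ is convex. Since every $V_i$ satisfies $V_i(0)=0$, we have $W(0)=0$; and since every $V_i(x)>0$ for $x\neq 0$, we get $W(x)=\max_i V_i(x)>0$ for $x\neq 0$. Observe that $W$ is in general only continuous (not differentiable) due to the $\max$, but this causes no difficulty because Proposition~\ref{prop:convex.lyap}~(i) requires only convexity and positivity, not smoothness.

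The heart of the argument is the decrease condition $W(f_i(x))<W(x)$ for all $x\neq 0$ and all $i\in\{1,\ldots,m\}$. Fix such an $i$ and $x$. For each index $j\in\{1,\ldots,K\}$, hypothesis (\ref{eq:max.quadratics.LMIs}) supplies an index $k=k(i,j)$, independent of $x$, with $V_j(f_i(x))<V_{k(i,j)}(x)$. Since $V_{k(i,j)}(x)\leq\max_\ell V_\ell(x)=W(x)$, we obtain $V_j(f_i(x))<W(x)$ for every $j$. Now $W(f_i(x))=\max_j V_j(f_i(x))$ is a maximum over the finite set $\{1,\ldots,K\}$, hence attained at some $j^\ast$, so $W(f_i(x))=V_{j^\ast}(f_i(x))<W(x)$, which is the desired strict inequality.

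With these facts in hand, $W$ meets every hypothesis of Proposition~\ref{prop:convex.lyap}~(i), which immediately yields global asymptotic stability under arbitrary switching and identifies $W$ as a common Lyapunov function for $f_1,\ldots,f_m$. The proof is essentially a reduction to the previous proposition; the only point requiring mild care is the preservation of strict inequality through the maximum operation, which is exactly where finiteness of $K$ enters. I therefore do not anticipate a genuine obstacle, only this one technical subtlety.
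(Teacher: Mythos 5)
Your proposal is correct and follows essentially the same route as the paper's own proof: verify that $W$ is convex and positive definite, use (\ref{eq:max.quadratics.LMIs}) to get $V_j(f_i(x))<V_{k(i,j)}(x)\leq W(x)$ for every $j$, pass the strict inequality through the finite maximum to obtain $W(f_i(x))<W(x)$, and then invoke Proposition~\ref{prop:convex.lyap}~(i). No discrepancies to report.
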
  
\begin{proof}
It suffices to show the latter claim because the former would then follow from Proposition \ref{prop:convex.lyap}, part (i), as it is clear that $W$ so constructed is positive definite and convex.
Let $i \in \{1,\ldots,m\}$ be fixed. From (\ref{eq:max.quadratics.LMIs}), for any $j \in \{1,\ldots,K\},$ there exists $k \in \{1,\ldots,K\}$ such that $$V_j(f_i(x))<V_k(x), \forall x\neq 0.$$ As $W$ is the pointwise maximum of $V_k, k=1,\ldots,K$, it follows that $$V_j(f_i(x))<W(x), \forall x\neq 0 \text{ and } \forall j\in \{1,\ldots,K\}.$$
Hence $W(f_i(x))< W(x), \forall x\neq 0.$
%
\end{proof}

In the case where $f_1,\ldots,f_m$ are polynomials, and $V_1,\ldots,V_k$ are parametrized as sos-convex polynomials, the search for $W$ can be carried out by semidefinite programming after replacing the inequalities in (\ref{eq:max.quadratics.LMIs}) with their sos counterparts. Note that the above proposition does not give just one way of formulating such an SDP, but rather $K^{m^2}$ of them. Indeed, for any fixed pair $(i,j)$, there are $K$ choices for the index $k$. In the language of \cite{ajpr-sicon}, each of these SDPs corresponds to a particular path-complete graph and its feasibility provides a proof of stability. 

\section*{Acknowledgments}
The \rmj{authors are} thankful to Alexandre Megretski for insightful discussions around convex Lyapunov functions.


\bibliographystyle{plain}
\bibliography{pablo_amirali}

\end{document}